\begin{document}

\theoremstyle{plain}
\newtheorem{theorem}{Theorem}[section]
\newtheorem{proposition}[theorem]{Proposition}
\newtheorem{lemma}[theorem]{Lemma}
\newtheorem{corollary}[theorem]{Corollary}
\newtheorem{conj}[theorem]{Conjecture}

\theoremstyle{definition}
\newtheorem{definition}[theorem]{Definition}
\newtheorem{exam}[theorem]{Example}
\newtheorem{remark}[theorem]{Remark}

\numberwithin{equation}{section}

\title[Einstein metrics on $F_4$]
{Non-naturally reductive Einstein metrics on the compact simple Lie group $F_4$}

\author{Zhiqi Chen}
\address{School of Mathematical Sciences and LPMC \\ Nankai University \\ Tianjin 300071, P.R. China} \email{chenzhiqi@nankai.edu.cn}

\author{Ke Liang}
\address{School of Mathematical Sciences and LPMC \\ Nankai University \\ Tianjin 300071, P.R. China}\email{liangke@nankai.edu.cn}

\subjclass[2010]{Primary 53C25; Secondary 53C30, 17B20.}

\keywords{Einstein metric, homogeneous space, naturally reductive
metric, non-naturally reductive metric, involution}

\begin{abstract}
Based on the representation theory and the study on the involutions
of compact simple Lie groups, we show that $F_4$ admits
non-naturally reductive Einstein metrics.
\end{abstract}

\maketitle


\setcounter{section}{0}
\section{Introduction}
A Riemannian manifold $(M,\langle\ ,\ \rangle)$ is called Einstein
if the Ricci tensor $\mathrm{Ric}$ of the metric $\langle\ ,\
\rangle$ satisfies $\mathrm{Ric}=c\langle\ ,\ \rangle$ for some
constant $c$. For the study on homogeneous Einstein metrics, see
\cite{Bo1,Bo2,Bo3,WZ1}, and the survey \cite{NR1}. 

In particular, there are a lot of important results on the study of
Einstein metrics on Lie groups. It is proved in \cite{Ja1} that any
Einstein solvmanifold, that is, a simply connected solvable Lie
group endowed with a left invariant metric, is standard. According
to a long standing conjecture attributed to Alekseevskii (see
\cite{Be1}), this exhausts the class of noncompact homogeneous
Einstein manifolds. It is well known that Einstein metrics on a
solvable Lie group are unique up to isometry and scaling \cite{He1},
which is in sharp contrast to the compact setting. For a compact Lie
group $G$, the problem is how to give Einstein metrics on it.

In \cite{DZ1}, D'Atri and Ziller gives a large number of
left invariant Einstein metrics, which are naturally reductive. The
problem of finding non-naturally reductive left invariant Einstein
metrics on compact Lie groups seems harder, and in fact this is
stressed in \cite{DZ1}. In \cite{Mo1}, Mori obtains non-naturally
reductive Einstein metrics on the Lie group $SU(n)$ for $n\geq 6$ by
using the method of Riemannian submersions. Based on the discussion
on K\"ahler C-spaces, \cite{AMY1} gives non-naturally reductive
Einstein metrics on compact simple Lie groups $SO(n)$ for $n\geq
11$, $Sp(n)$ for $n\geq 3$, $E_6$, $E_7$, and $E_8$. In \cite{GLP},
there are non-naturally reductive Einstein metrics on low
dimensional Lie groups such as $SU(3)$, $SO(5)$ and $G_2$. 

Up to now, it seems no non-naturally reductive Einstein metric on $F_4$. Based on the representation theory and the study on the
involutions of compact simple Lie groups, we get two new Einstein
metrics on $F_4$, one of which is non-naturally reductive. Thus we prove the following theorem.
\begin{theorem}\label{main}
The compact simple Lie group $F_4$ admits non-naturally reductive Einstein metrics.
\end{theorem}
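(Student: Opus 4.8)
The plan is to construct explicit left-invariant metrics on $F_4$ that are adapted to a suitable decomposition of its Lie algebra $\mathfrak{f}_4$ coming from an involution, and then to solve the resulting system of algebraic equations for the Einstein condition. Concretely, I would first fix a symmetric subgroup $K\subset F_4$ arising from an involution $\sigma$, giving the Cartan-type decomposition $\mathfrak{f}_4=\mathfrak{k}\oplus\mathfrak{p}$, and then further refine this by decomposing $\mathfrak{k}$ itself into irreducible pieces under the adjoint action of a smaller subalgebra. The natural candidate here is the involution whose fixed-point set is $\mathfrak{so}(9)$ (so that $F_4/\mathrm{Spin}(9)$ is the Cayley projective plane), or alternatively the one with fixed algebra $\mathfrak{sp}(3)\oplus\mathfrak{su}(2)$; one wants a decomposition $\mathfrak{f}_4=\mathfrak{h}_0\oplus\mathfrak{m}_1\oplus\cdots\oplus\mathfrak{m}_r$ into $\mathrm{Ad}(H)$-irreducible summands for some closed subgroup $H$, so that the Ad$(H)$-invariant metrics form a finite-parameter family $\langle\,,\,\rangle=\sum_i x_i\,(-B)|_{\mathfrak{m}_i}$ with $x_i>0$, where $B$ is the Killing form.

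Next I would compute the Ricci tensor of such a diagonal metric. The key structural input is the collection of numbers $[ijk]$ (the squared norms of the projections of brackets of orthonormal bases, symmetric in $i,j,k$), in terms of which there is a standard formula for the components $r_i$ of the Ricci tensor along each summand $\mathfrak{m}_i$. Determining these structure constants is where the representation theory invoked in the abstract enters: I would use branching rules and dimension counts for the relevant representations of $H$ to pin down which triples $(\mathfrak{m}_i,\mathfrak{m}_j,\mathfrak{m}_k)$ admit nonzero bracket components and to evaluate the constants $[ijk]$ explicitly. Once these are known, the Einstein condition becomes the system $r_1=r_2=\cdots=r_r$, a set of polynomial equations in the ratios of the $x_i$.

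I would then solve this polynomial system, normalizing one variable (say $x_1=1$) to fix the scaling freedom, and search for positive real solutions. The expectation is that besides the known naturally reductive solutions (for instance the bi-invariant metric and the D'Atri–Ziller family associated to $H$), the system admits at least one extra positive solution. To complete the proof I must verify that this solution yields a genuinely \emph{non-naturally reductive} metric: I would appeal to the classification of naturally reductive left-invariant metrics on compact simple Lie groups due to D'Atri and Ziller, checking that the numerical values of the $x_i$ at the new solution do not match any metric in that classification (typically because the metric fails to be constant on the relevant subalgebra or because the pattern of the $x_i$ is incompatible with the naturally reductive normal form).

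The main obstacle I anticipate is twofold. First, obtaining the structure constants $[ijk]$ correctly is delicate: it requires a careful choice of involution and subgroup $H$ so that the isotropy representation splits into enough inequivalent irreducible summands to give a rich enough system of equations, while remaining small enough to be computationally tractable; miscounting multiplicities or overlooking equivalences between summands would invalidate the Ricci formula. Second, even after the system is set up, exhibiting a positive solution distinct from the naturally reductive ones and rigorously certifying its non-naturally-reductive character is the crux—this is precisely the step that, as noted in \cite{DZ1}, makes the problem hard, since one must rule out that the new metric is isometric (after possibly an automorphism of $F_4$) to a naturally reductive one.
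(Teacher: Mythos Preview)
Your outline is correct in spirit and matches the paper's overall strategy: fix a subgroup $H$, write a diagonal $\mathrm{Ad}(H)$-invariant metric, compute Ricci, solve the polynomial system, and invoke the D'Atri--Ziller classification to certify non-natural reductivity. The paper, however, makes a very specific choice that you only gesture at. Rather than a single involution followed by an ad hoc refinement, it uses a \emph{pair of commuting involutions} $\theta,\tau$ on $\mathfrak{f}_4$, yielding a $\mathbb{Z}_2\times\mathbb{Z}_2$-grading $\mathfrak{f}_4=\mathfrak{h}_1\oplus\mathfrak{h}_2\oplus\mathfrak{h}_3\oplus\mathfrak{h}_4$ with $\mathfrak{h}_1\cong D_4$ simple (the chain is $D_4\subset B_4\subset F_4$, and $\mathfrak{h}_2,\mathfrak{h}_3,\mathfrak{h}_4$ are the three inequivalent $8$-dimensional representations of $D_4$). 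A separate result in the paper shows that $F_4$ is the \emph{only} compact simple group for which such an inner involution pair yields simple $\mathfrak{h}_1$; this is what singles out the construction. The grading makes the bracket relations $[\mathfrak{h}_i,\mathfrak{h}_j]\subset\mathfrak{h}_k$ completely rigid, so instead of assembling the constants $[ijk]$ by branching rules as you propose, the paper computes the Ricci tensor directly from the Levi--Civita connection and then evaluates the Casimir-type sums $\sum_j\mathrm{ad}^2e_j|_{\mathfrak{h}_i}$ via the Freudenthal formula. The resulting four-parameter Einstein system has four positive solutions up to scaling, and exactly one of them, approximately $(u_1,u_2,u_3,u_4)\approx(0.7019,1,1,1.3842)$, violates the naturally reductive pattern (which here reduces to the simple combinatorial criterion $u_2=u_3=u_4$ or $u_1=u_{i}$, $u_j=u_k$). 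Your alternative candidate $\mathfrak{sp}(3)\oplus\mathfrak{su}(2)$ is not the route taken; the $D_4$ choice is what makes both the Ricci computation and the non-naturally-reductive check clean.
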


\section{The previous study on Einstein metrics on compact Lie groups}
Let $(M, g)$ be a Riemannian manifold and $I(M,g)$ the Lie group of
all isometries of $M$. Then $(M,g)$ is said to be $K$-homogeneous if
a Lie subgroup $K$ of $I(M,g)$ acts transitively on $M$. For a
$K$-homogeneous Riemannian manifold $(M,g)$, we fix a point $o\in M$
and identify $M$ with $K/L$ where $L$ is the isotropy subgroup of
$K$ at $o$. Let $k$ be the Lie algebra of $K$ and ${\mathfrak l}$
the subalgebra corresponding to $L$. Take a vector space ${\mathfrak
p}$ complement to ${\mathfrak l}$ in ${\mathfrak k}$ with
$Ad(L){\mathfrak p}\subset {\mathfrak p}$. Then we may identify
${\mathfrak p}$ with $T_o(M)$ in a natural way. We can pull back the
inner product go on $T_o(M)$ to an inner product on ${\mathfrak p}$,
denoted by $\langle\ ,\ \rangle$. For $X\in {\mathfrak k}$ we will
denote by $X_{\mathfrak l}$ ( resp. $X_{\mathfrak p}$) the
${\mathfrak l}$-component (resp. ${\mathfrak p}$-component) of $X$.
A homogeneous Riemannian metric on $M$ is said to be naturally
reductive if there exist $K$ and ${\mathfrak p}$ as above such that
$$\langle [Z,X]_{\mathfrak p} ,Y \rangle + \langle X, [Z, Y ]_{\mathfrak p}\rangle =0, \text{ for any } X,Y,Z\in {\mathfrak p}.$$

In \cite{DZ1}. D'Atri and Ziller have investigated naturally
reductive metrics among the left invariant metrics on compact Lie
groups, and have given a complete classification in the case of
simple Lie groups. Let $G$ be a compact connected semi-simple Lie
group, $H$ a closed subgroup of $G$, and let ${\mathfrak g}$ be the
Lie algebra of $G$ and ${\mathfrak h}$ the subalgebra corresponding
to $H$. We denote by $B$ the negative of the Killing form of
${\mathfrak g}$. Then $B$ is an $Ad(G)$-invariant inner product on
${\mathfrak g}$. Let ${\mathfrak m}$ be a orthogonal complement of
${\mathfrak h}$ with respect to $B$. Then we have $${\mathfrak
g}={\mathfrak h}\oplus {\mathfrak m},\quad Ad(H){\mathfrak m}\subset
{\mathfrak m}.$$ Let ${\mathfrak h}={\mathfrak h}_0\oplus {\mathfrak
h}_1\oplus\cdots\oplus{\mathfrak h}_p$ be the decomposition into
ideals of ${\mathfrak h}$, where ${\mathfrak h}_0$ is the center of
${\mathfrak h}$ and ${\mathfrak h}_i(i=1,\cdots,p)$ are simple
ideals of ${\mathfrak h}$. Let $A_0|_{{\mathfrak h}_0}$ be an
arbitrary metric on ${\mathfrak h}_0$.

\begin{theorem}[\cite{DZ1}] Under the notations above, a left
invariant metric on $G$ of the form
\begin{equation}\label{natural}
\langle\ ,\ \rangle=x\cdot B|_{\mathfrak m}+A_0|_{\mathfrak
h_0}+u_1\cdot B|_{\mathfrak h_1}+\cdots+u_p\cdot B|_{\mathfrak h_p}
\quad (x, u_1, \cdots, u_p \in {\mathbb R}^+) \end{equation} is
naturally reductive with respect to $G\times H$, where $G\times H$
acts on $G$ by $(g, h)y = gyh^{-1}$. Conversely, if a left invariant
metric $\langle\ ,\  \rangle$ on a compact simple Lie group $G$ is
naturally reductive, then there exists a closed subgroup $H$ of $G$
such that the metric $\langle\ ,\ \rangle$ is given by the form
(\ref{natural}).
\end{theorem}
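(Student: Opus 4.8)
The plan is to prove the two implications separately, treating the explicit construction first and the structural converse second. Throughout I write the left-invariant metric as $\langle X,Y\rangle=B(\varphi X,Y)$ for a $B$-symmetric positive-definite operator $\varphi$ on $\mathfrak g$, and I realize $G$ as the homogeneous space $(G\times H)/\Delta H$, where $G\times H$ acts by $(a,h)y=ayh^{-1}$ and $\Delta H=\{(h,h):h\in H\}$ is the isotropy at $e$. The differential at $e$ of the orbit map identifies $\mathfrak k=\mathfrak g\oplus\mathfrak h$ with $T_eG=\mathfrak g$ via $(X,Y)\mapsto X-Y$, with kernel $\Delta\mathfrak h=\{(Y,Y):Y\in\mathfrak h\}$, so a reductive complement $\mathfrak p$ to $\Delta\mathfrak h$ has dimension $\dim\mathfrak g$.

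For the forward implication I would use the standard sufficient criterion for natural reductivity: if $Q$ is an $\mathrm{Ad}(G\times H)$-invariant symmetric bilinear form on $\mathfrak k$ for which $\mathfrak p$ is the $Q$-orthogonal complement of $\Delta\mathfrak h$ and $Q|_{\mathfrak p}$ equals the given inner product, then the metric is naturally reductive; this follows at once from the $\mathrm{ad}$-invariance identity $Q([U,V],W)+Q(V,[U,W])=0$ by projecting onto $\mathfrak p$. Since $G\times H$ is compact, every such $Q$ has the block form $Q=x\,B|_{\mathfrak g}\oplus\phi$, where $\phi=\phi_0|_{\mathfrak h_0}\oplus c_1 B|_{\mathfrak h_1}\oplus\cdots\oplus c_p B|_{\mathfrak h_p}$ is an $\mathrm{Ad}(H)$-invariant form on $\mathfrak h$. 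I would then compute $\mathfrak p=\{(Z+\alpha(W),W):Z\in\mathfrak m,\ W\in\mathfrak h\}$, where $\alpha(W)\in\mathfrak h$ is determined by $x\,B(\alpha(W),Y)+\phi(W,Y)=0$ for all $Y\in\mathfrak h$, and restrict $Q$ to $\mathfrak p$. A short computation on each simple ideal gives the relation $u_i=\dfrac{c_i x}{x+c_i}$ between the scaling $c_i$ of $\phi$ and the scaling $u_i$ of the resulting metric on $\mathfrak h_i$, together with a similar bijection between $\phi_0$ and $A_0$ on the centre. Solving these (e.g. $c_i=u_i x/(x-u_i)$) shows that for every choice of $x,u_1,\dots,u_p>0$ and every positive-definite $A_0$ there is a form $Q$ realizing (\ref{natural}); hence every metric of that shape is naturally reductive with respect to $G\times H$.

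For the converse I would argue structurally. Left translations are isometries, so $G$ sits in $I_0(G,\langle\,,\,\rangle)$ and already acts simply transitively; basing at $e$, the isotropy of the transitive isometry group embeds into $\mathrm{Aut}(\mathfrak g)$. By the classical description of the isometry group of a left-invariant metric on a compact simple Lie group, $I_0$ is contained in the group generated by left and right translations, using $\mathrm{Aut}(\mathfrak g)_0=\mathrm{Ad}(G)$; hence the naturally reductive (transitive) group may be taken as $G\times H$ acting by $(a,h)y=ayh^{-1}$, where $H=\{h\in G:R_h\ \text{is an isometry}\}$ is a closed subgroup and the isotropy at $e$ is $\Delta H$. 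In particular $\langle\,,\,\rangle$ is $\mathrm{Ad}(H)$-invariant, so $\varphi$ commutes with $\mathrm{Ad}(H)$ and preserves both $\mathfrak m=\mathfrak h^{\perp_B}$ and the decomposition of $\mathfrak h$ into its centre and simple ideals. Finally I would invoke Kostant's theorem (\cite{Be1}) that a naturally reductive metric arises from an $\mathrm{Ad}$-invariant nondegenerate form $Q$ on the transvection algebra with $\mathfrak p=(\Delta\mathfrak h)^{\perp_Q}$; running the computation of the first part backwards then forces $\varphi|_{\mathfrak m}=x\cdot\mathrm{id}$ (scalar, since $Q|_{\mathfrak g}$ is a multiple of $B$) and $\varphi|_{\mathfrak h_i}=u_i\cdot\mathrm{id}$, which is exactly the form (\ref{natural}).

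I expect the converse to be the main obstacle. The delicate points are the reduction of the transitive isometry group to the product $G\times H$ — precisely where compactness and simplicity of $G$ enter, through $\mathrm{Aut}(\mathfrak g)_0=\mathrm{Ad}(G)$ — and the correct application of Kostant's theorem to the transvection algebra, including verifying effectiveness and ruling out cross-terms in $Q$ between $\mathfrak g$ and any simple ideal of $\mathfrak h$ isomorphic to a factor of $\mathfrak g$. The forward implication, by contrast, is a direct construction whose only real content is the scalar bookkeeping encoded in $u_i=c_i x/(x+c_i)$.
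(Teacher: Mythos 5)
Your statement is Theorem~2.1 of the paper, which is quoted from D'Atri--Ziller \cite{DZ1} and used there as a black box; the paper contains no proof of it. So the comparison must be with the original argument in \cite{DZ1}, and your proposal is in essence a reconstruction of that argument: the forward direction through an $\mathrm{Ad}(G\times H)$-invariant (in general indefinite) symmetric bilinear form $Q$ on $\mathfrak g\oplus\mathfrak h$, with $\mathfrak p=(\Delta\mathfrak h)^{\perp_Q}$ and the metric read off from $Q|_{\mathfrak p}$, and the converse through the Ochiai--Takahashi description of the isometry group of a left-invariant metric on a compact simple Lie group (this is where simplicity and $\mathrm{Aut}(\mathfrak g)_0=\mathrm{Ad}(G)$ enter) combined with Kostant's theorem on the transvection group. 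Your bookkeeping in the forward direction is correct: cross terms in $Q$ vanish because $\mathfrak g$ has no nonzero $\mathrm{Ad}(G)$-fixed vectors, and the graph construction does give $u_i=c_ix/(x+c_i)$.

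The one genuine gap is the degenerate case $u_i=x$ (and, on the centre, the case where $A_0$ has $x$ as an eigenvalue). The theorem as stated allows these values, but your inversion $c_i=u_ix/(x-u_i)$ has no solution there: since $u_i=c_ix/(x+c_i)$ is $<x$ for $c_i>0$ and $>x$ for admissible $c_i<0$, no invariant block-form $Q$ produces $u_i=x$ via your construction. As written, therefore, your forward proof misses exactly the metrics that agree with $x\cdot B$ on some ideal of $\mathfrak h$. The repair must be stated: for those ideals (and the corresponding central subspace) abandon the graph and put $(\mathfrak h_i,0)$ directly into $\mathfrak p$; on these blocks the metric is the restriction of the bi-invariant form $x\cdot B$, so the natural reductivity identity holds there by $\mathrm{ad}$-invariance of $B$, while your $Q$-argument handles the remaining blocks. (Alternatively, replace $H$ by the subgroup $H'$ whose Lie algebra omits those ideals; the metric is then of the form (\ref{natural}) for $H'$ with all $u_i\neq x$ and is naturally reductive with respect to $G\times H'$ --- enough for every use of the theorem in this paper, though strictly weaker than the assertion ``with respect to $G\times H$''.) Beyond this, be aware that your converse is an outline rather than a proof: reducing an arbitrary naturally reductive presentation to one of the form $G\times H$ and eliminating cross terms on the transvection algebra are precisely the substantive part of \cite{DZ1}; you have named the right ingredients without carrying them out.
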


There is a simple formula for the Ricci curvature of a left
invariant metric which was derived in \cite{Sa1}. In \cite{DZ1},
D'Atri and Ziller gave a simple proof.

\begin{lemma}[\cite{DZ1,Sa1}]\label{Ziller}
For any $x,y\in{\mathfrak g}$,
$\mathrm{Ric}(x,y)=-\mathrm{tr}(\nabla_x-\mathrm{ad}
x)(\nabla_y-\mathrm{ad} y)$.
\end{lemma}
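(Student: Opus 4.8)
The plan is to derive the formula directly from the definition of the Ricci tensor together with the two structural identities satisfied by the Levi-Civita connection of a left-invariant metric. Throughout I identify each $x\in\mathfrak{g}$ with the corresponding left-invariant vector field, and I regard $\nabla_x\colon\mathfrak{g}\to\mathfrak{g}$, $y\mapsto\nabla_x y$, and $\mathrm{ad}\,x\colon\mathfrak{g}\to\mathfrak{g}$, $y\mapsto[x,y]$, as endomorphisms of $\mathfrak{g}$; the covariant derivative of a left-invariant field along a left-invariant field is again left-invariant, so $\nabla_x$ is well defined. Since $\langle\ ,\ \rangle$ is left-invariant, the function $\langle y,z\rangle$ is constant for left-invariant $y,z$, so Koszul's formula loses its three derivative terms and reduces to $2\langle\nabla_x y,z\rangle=\langle[x,y],z\rangle-\langle[y,z],x\rangle+\langle[z,x],y\rangle$. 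From this I read off the two facts I will use repeatedly: each $\nabla_x$ is skew-symmetric with respect to $\langle\ ,\ \rangle$ (metric compatibility), and $\nabla_x y-\nabla_y x=[x,y]$ (vanishing torsion).

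The torsion identity is the crucial simplification. Writing $D_x:=\nabla_x-\mathrm{ad}\,x$, it gives $D_x y=\nabla_x y-[x,y]=\nabla_y x$, so $D_x$ is exactly the operator $y\mapsto\nabla_y x$. This is what makes the right-hand side of the asserted formula natural, and it is the form in which I will carry out the contraction.

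Next I would expand the Ricci tensor. Fixing an orthonormal basis $\{e_i\}$ of $(\mathfrak{g},\langle\ ,\ \rangle)$, I use $\mathrm{Ric}(x,y)=\sum_i\langle R(e_i,x)y,e_i\rangle$ with $R(z,x)=[\nabla_z,\nabla_x]-\nabla_{[z,x]}$. Expanding the three terms and using the skew-symmetry of each $\nabla_{e_i}$ and of $\nabla_x$ to move operators across the inner product, I then substitute $D_x y=\nabla_y x$ to rewrite every summand in terms of the $D$-operators. After relabelling the sum over the orthonormal basis, and using that $\sum_i\langle Ae_i,e_i\rangle=\mathrm{tr}\,A$ is basis-independent, the terms recombine into $-\sum_i\langle D_x e_i,D_y^{*}e_i\rangle=-\mathrm{tr}(D_x D_y)$, which is the claim.

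The part requiring care is the recombination in the last step: the curvature expression produces, besides the wanted $\mathrm{tr}(D_x D_y)$, several mixed terms, and one must check that they cancel. They do so precisely because of the two identities above --- torsion-freeness lets one trade $\nabla_x y$ for $\nabla_y x+[x,y]$, and metric compatibility lets one transpose operators inside the trace --- together with the freedom to choose the orthonormal basis. Keeping the curvature sign convention fixed from the start is essential; I would verify the overall sign on the bi-invariant case $\nabla_x=\tfrac12\mathrm{ad}\,x$, where Jacobi's identity gives $R(z,x)=-\tfrac14\mathrm{ad}[z,x]$ and a short computation yields $\mathrm{Ric}(x,y)=-\tfrac14\mathrm{tr}(\mathrm{ad}\,x\,\mathrm{ad}\,y)=\tfrac14 B(x,y)$. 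On the other hand $D_x=-\tfrac12\mathrm{ad}\,x$ here, so $-\mathrm{tr}(D_xD_y)=-\tfrac14\mathrm{tr}(\mathrm{ad}\,x\,\mathrm{ad}\,y)$ agrees, giving a clean consistency check on both the formula and its sign.
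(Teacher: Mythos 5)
The paper itself gives no proof of this lemma (it is quoted from D'Atri--Ziller and Sagle), so the comparison is with the standard argument, and your outline does follow that argument in its main steps: setting $D_x=\nabla_x-\mathrm{ad}\,x$, using torsion-freeness to recognize $D_xy=\nabla_yx$, and contracting the curvature against an orthonormal basis. Indeed, two of the three curvature terms recombine exactly as you predict: skew-symmetry of $\nabla_x$ gives $-\sum_i\langle\nabla_x\nabla_{e_i}y,e_i\rangle=\sum_i\langle D_ye_i,\nabla_xe_i\rangle=-\mathrm{tr}(\nabla_xD_y)$, and $-\sum_i\langle\nabla_{[e_i,x]}y,e_i\rangle=\mathrm{tr}(\mathrm{ad}\,x\,D_y)$; their sum is $-\mathrm{tr}(D_xD_y)$.

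The gap is your claim that the remaining terms cancel ``precisely because of'' torsion-freeness and metric compatibility (plus choice of basis). They do not. The third term is
$$\sum_i\langle\nabla_{e_i}\nabla_xy,e_i\rangle=-\Bigl\langle\nabla_xy,\sum_i\nabla_{e_i}e_i\Bigr\rangle=-\langle\nabla_xy,H\rangle,$$
and Koszul's formula shows that the mean-curvature vector $H=\sum_i\nabla_{e_i}e_i$ satisfies $\langle H,z\rangle=\mathrm{tr}(\mathrm{ad}\,z)$ for all $z$; so this term vanishes if and only if $\mathfrak g$ is unimodular. That is an extra hypothesis, not a consequence of your two identities. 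Concretely, on the non-unimodular solvable algebra with $[e_1,e_2]=e_2$ and $e_1,e_2$ orthonormal (the hyperbolic plane, $\mathrm{Ric}=-\langle\ ,\ \rangle$), one computes $\nabla_{e_2}e_1=-e_2$, $\nabla_{e_2}e_2=e_1$, hence $D_{e_2}e_1=0$, $D_{e_2}e_2=e_1$, so $-\mathrm{tr}(D_{e_2}D_{e_2})=0$ while $\mathrm{Ric}(e_2,e_2)=-1$; the correct general formula is $\mathrm{Ric}(x,y)=-\mathrm{tr}(D_xD_y)-\langle\nabla_xy,H\rangle$. The lemma as stated is nevertheless true in the paper's setting because $\mathfrak g$ is compact semisimple, hence unimodular --- but your proof must invoke this: the one place where the standing hypothesis on $\mathfrak g$ enters is exactly the step you dismissed as bookkeeping. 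Note also that your bi-invariant consistency check cannot detect the omission, since the existence of a bi-invariant metric already forces $\mathrm{tr}(\mathrm{ad}\,z)=0$.
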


Based on the Lemma~\ref{Ziller}, D'Atri and Ziller gave a
sufficient and necessary condition for the left invariant metrics
given in~(\ref{natural}) to be Einstein. By computing the case when
${\mathfrak h}$ acts irreducibly on ${\mathfrak m}$, they get the
following number of Einstein metrics on the exceptional groups:
$$5 \text{ on } G_2,\quad 10 \text{ on } F_4, \quad 14 \text{ on } E_6,\quad 15 \text{ on } E_7, \quad 11 \text{ on } E_8,$$
and the following number of Einstein metrics on the classical
groups:
$$n+1 \text{ on } SU(2n), SU(2n+3), Sp(2n), Sp(2n-1), \quad 3n-2 \text{ on } SO(2n), SO(2n+1),$$
and several more in isolated dimensions. 

Let $\mathfrak g$ be a compact simple Lie algebra, let $\theta$ be
an involution of $\mathfrak g$, and let $${\mathfrak h}=\{x\in
{\mathfrak g}|\theta(x)=x\}, \quad {\mathfrak m}=\{x\in {\mathfrak
g}|\theta(x)=-x\}.$$ Then ${\mathfrak g}={\mathfrak h}\oplus
{\mathfrak m}$, $({\mathfrak h},{\mathfrak m})=0$ and ${\mathfrak
h}$ acts irreducibly on ${\mathfrak m}$. For this case, by the
theory of ansatz, Mujtaba \cite{Mu1} and Pope \cite{Po1}
gave the following number of non-equivalent Einstein metrics on some
classical groups respectively:
$$2n+1 \text{ on } SU(2n), \quad 2n \text{ on } SU(2n+1),\quad 3n-4 \text{ on } SO(2n),\quad 3n-3 \text{ on } SO(2n+1).$$

But all the Einstein metrics mentioned above are naturally
reductive. In order to find non-naturally reductive Einstein
metrics, \cite{AMY1} gives a further discussion based on the form
(\ref{natural}).

Let ${\mathfrak m}={\mathfrak m}_1\oplus\cdots\oplus {\mathfrak
m}_q$ be a decomposition of $\mathfrak m$ into irreducible
$Ad(H)$-modules ${\mathfrak m}_j$ for any $j=1,\cdots,q$, and assume
that the $Ad(H)$-modules ${\mathfrak m}_j$ are mutually
non-equivalent, and that the ideals ${\mathfrak h}_i, i=1,\cdots,p$
of ${\mathfrak h}$ are mutually non-isomorphic. In particular,
assume that $\dim {\mathfrak h}_0\leq 1$. We consider the following
left invariant metric on $G$ which is $Ad(H)$-invariant:
$$\langle\ ,\ \rangle=u_0\cdot B|_{\mathfrak h_0}+u_1\cdot B|_{\mathfrak h_1}+\cdots+u_p\cdot B|_{\mathfrak h_p}+x_1\cdot B|_{\mathfrak m_1}+\cdots+x_q\cdot B|_{\mathfrak
m_q},$$ where $u_0, u_1,\cdots, u_p, x_1,\cdots, x_q\in {\mathbb
R}^+$. In order to compute the Ricci tensor of the left invariant
metric $\langle\ ,\ \rangle$ on $G$, write the decomposition
${\mathfrak g}={\mathfrak h}_0\oplus {\mathfrak h}_1
\oplus\cdots\oplus {\mathfrak h}_p\oplus {\mathfrak m}_1 \oplus
\cdots \oplus {\mathfrak m}_q$ as ${\mathfrak g}={\mathfrak
w}_0\oplus {\mathfrak w}_1\oplus\cdots\oplus{\mathfrak w}_p\oplus
{\mathfrak w}_{p+1}\oplus \cdots\oplus{\mathfrak w}_{p+q}$.

Note that the space of left invariant symmetric covariant 2-tensors
on $G$ which are $Ad(H)$-invariant is given by
\begin{equation}\label{non-natural}
\{v_0\cdot B|_{\mathfrak w_0}+ v_1\cdot B|_{\mathfrak w_1}+\cdots +
v_{p+q}\cdot B|_{\mathfrak w_{p+q}} | v_0, v_1, \cdots, v_{p+q}\in
{\mathbb R}\}. \end{equation}

In particular, the Ricci tensor $r$ of a left invariant Riemannian
metric $\langle\ ,\ \rangle$ on $G$ is a left invariant symmetric
covariant 2-tensor on $G$ which is $Ad(H)$-invariant and thus $r$ is
of the form (\ref{non-natural}). Let ${e_\alpha}$ be a
$B$-orthonormal basis adapted to the decomposition of ${\mathfrak
g}$, i.e., $e_\alpha\in {\mathfrak w}_i$ for some $i$, and
$\alpha<\beta$ if $i<j$ (with $e_\alpha\in {\mathfrak w}_i$ and
$e_\beta\in {\mathfrak w}_j$). We set
$A^\gamma_{\alpha\beta}=B([e_\alpha, e_\beta], e_\gamma)$ so that
$[e_\alpha, e_\beta]=\sum_\gamma A^\gamma_{\alpha\beta}e_\gamma$,
and set $\left[\begin{array}{c} k \\ ij \end{array}\right]=\sum
(A^\gamma_{\alpha\beta})^2$, where the sum is taken over all indices
$\alpha, \beta,\gamma$ with $e_\alpha\in {\mathfrak w}_i$,
$e_\beta\in {\mathfrak w}_j$ and $e_\gamma\in {\mathfrak w}_k$. Then
$\left[\begin{array}{c} k \\ ij \end{array} \right]$ is independent
of the $B$-orthonormal bases chosen for ${\mathfrak w}_i, {\mathfrak
w}_j ,{\mathfrak w}_k$, and $\left[\begin{array}{c} k \\ ij
\end{array} \right]=\left[\begin{array}{c} k \\ ji \end{array}
\right]=\left[\begin{array}{c} j \\ ki \end{array}\right]$. Write a
metric on $G$ of the form (\ref{non-natural}) as
$$\langle\ ,\ \rangle=y_0\cdot B|_{\mathfrak w_0}+y_1\cdot B|_{\mathfrak w_1}+\cdots+y_p\cdot B|_{\mathfrak
w_p}+ y_{p+1}\cdot B|_{\mathfrak w_{p+1}}+\cdots+ y_{p+q}\cdot
B|_{\mathfrak w_{p+q}}$$ where $y_0, y_1,\cdots, y_{p+q}\in {\mathbb
R}^+$.

\begin{lemma}[\cite{AMY1}]\label{lemma3.3}
Let $d_k=\dim {\mathfrak w_k}.$ The components $r_0, r_1,\cdots,
r_{p+q}$ of the Ricci tensor $r$ of the above metric $\langle\ ,\
\rangle$ on $G$ are given by
$$r_k=\frac{1}{2y_k}+\frac{1}{4d_k}\sum_{j,i}\frac{y_k}{y_jy_i}\left[\begin{array}{c} k \\
ji\end{array}\right]-\frac{1}{2d_k}\sum_{j,i}\frac{y_j}{y_ky_i}\left[\begin{array}{c}
j \\ ki \end{array}\right]$$ for any $k=0,1,\cdots, p+q)$, where the
sum is taken over all $i,j=0,1,\cdots, p+q$. Moreover, for each $k$
we have $\sum_{i,j}\left[\begin{array}{c} j \\ ki
\end{array}\right]=d_k.$
\end{lemma}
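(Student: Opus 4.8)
The plan is to compute $\mathrm{Ric}(f,f)$ for $\langle\,,\rangle$-unit vectors $f$ in each block $\mathfrak w_k$ from the standard left-invariant Ricci formula, to re-express every structure constant of the $\langle\,,\rangle$-orthonormal frame through the $B$-orthonormal constants $A^\gamma_{\alpha\beta}$ and the scalars $y_k$, and then to collect the resulting triple sums into the bracket symbols. First I would record the algebraic reductions. Since $B$ is $\mathrm{Ad}(G)$-invariant, the constants $A^\gamma_{\alpha\beta}=B([e_\alpha,e_\beta],e_\gamma)$ are totally antisymmetric in $\alpha,\beta,\gamma$, which immediately yields the symmetries $\left[\begin{smallmatrix}k\\ij\end{smallmatrix}\right]=\left[\begin{smallmatrix}k\\ji\end{smallmatrix}\right]=\left[\begin{smallmatrix}j\\ki\end{smallmatrix}\right]$ recorded above. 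Because the $\mathfrak w_k$ are mutually inequivalent irreducible $\mathrm{Ad}(H)$-modules — this is exactly where the hypotheses on the $\mathfrak m_j$, on the $\mathfrak h_i$ and $\dim\mathfrak h_0\le1$ are used — Schur's lemma forces $r$ to be diagonal of the form (\ref{non-natural}) with $r|_{\mathfrak w_k}$ a multiple of $B|_{\mathfrak w_k}$; hence $r_k=\mathrm{Ric}(f,f)$ for every unit $f\in\mathfrak w_k$, and I may average this common value over a $B$-orthonormal basis of $\mathfrak w_k$, which is what produces the factor $1/d_k$. The one piece of algebra needed afterwards is the conversion $\langle[f_\alpha,f_\beta],f_\gamma\rangle=\sqrt{y_k/(y_iy_j)}\,A^\gamma_{\alpha\beta}$ for $e_\alpha\in\mathfrak w_i$, $e_\beta\in\mathfrak w_j$, $e_\gamma\in\mathfrak w_k$, where $f_\alpha=e_\alpha/\sqrt{y_{i(\alpha)}}$.

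Then, since $G$ is compact and therefore unimodular, I would start from the standard expression for the Ricci curvature of a left-invariant metric (equivalently, from evaluating the trace in Lemma~\ref{Ziller})
\begin{equation*}
\mathrm{Ric}(f,f)=-\frac12\sum_\alpha|[f,f_\alpha]|^2+\frac14\sum_{\alpha,\beta}\langle[f_\alpha,f_\beta],f\rangle^2+\frac12 B(f,f),
\end{equation*}
with the sums over the orthonormal frame $\{f_\alpha\}$. Taking $f=f_\mu$ with $e_\mu\in\mathfrak w_k$, summing over the $d_k$ such indices $\mu$, dividing by $d_k$, and inserting the conversion together with $|[f_\mu,f_\alpha]|^2=\sum_\gamma\langle[f_\mu,f_\alpha],f_\gamma\rangle^2$, the three terms separate cleanly. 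The Killing term gives $\tfrac12 B(f_\mu,f_\mu)=\tfrac1{2y_k}$ precisely because $B$ is the negative Killing form, which is the origin of the leading term $\tfrac1{2y_k}$. Grouping the two remaining triple sums by the blocks containing their indices turns them into $\frac1{4d_k}\sum_{i,j}\frac{y_k}{y_iy_j}\left[\begin{smallmatrix}k\\ij\end{smallmatrix}\right]$ and $-\frac1{2d_k}\sum_{i,j}\frac{y_j}{y_ky_i}\left[\begin{smallmatrix}j\\ki\end{smallmatrix}\right]$, and assembling the three contributions gives the stated formula for $r_k$.

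For the identity $\sum_{i,j}\left[\begin{smallmatrix}j\\ki\end{smallmatrix}\right]=d_k$ I would argue via the Casimir operator $C=\sum_\alpha(\mathrm{ad}\,e_\alpha)^2$. It commutes with $\mathrm{ad}(\mathfrak g)$, so by Schur's lemma and irreducibility of the adjoint representation of the simple algebra $\mathfrak g$ it is scalar; its trace is $\sum_\alpha\mathrm{tr}(\mathrm{ad}\,e_\alpha)^2=\sum_\alpha\mathrm{Kill}(e_\alpha,e_\alpha)=-\dim\mathfrak g$, so $C=-\mathrm{Id}$. Then $\mathrm{ad}$-invariance of $B$ gives, for each fixed $\beta$, $\sum_{\alpha,\gamma}(A^\gamma_{\alpha\beta})^2=\sum_\alpha B([e_\alpha,e_\beta],[e_\alpha,e_\beta])=-B(e_\beta,Ce_\beta)=1$; summing over the $d_k$ indices $\beta$ with $e_\beta\in\mathfrak w_k$ and regrouping $\alpha,\gamma$ by blocks yields $\sum_{i,j}\left[\begin{smallmatrix}j\\ik\end{smallmatrix}\right]=d_k$, which equals $\sum_{i,j}\left[\begin{smallmatrix}j\\ki\end{smallmatrix}\right]$ by the symmetry recorded in the first step.

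I expect the only real obstacle to be bookkeeping rather than anything conceptual: keeping the block indices $(i,j,k)$ and the scalars $y$ attached to the correct bracket symbol across the two triple sums, and applying the symmetries to identify $\left[\begin{smallmatrix}k\\ij\end{smallmatrix}\right]$ with $\left[\begin{smallmatrix}j\\ki\end{smallmatrix}\right]$ where the grouping demands it. A useful consistency check is to set all $y_k=1$: the identity then collapses $r_k$ to $\tfrac12+\tfrac14-\tfrac12=\tfrac14$, the Einstein constant of the bi-invariant metric $\langle\,,\rangle=B$.
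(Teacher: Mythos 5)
Your proposal is correct, but a comparison with ``the paper's own proof'' is moot here: the paper states this lemma as a quotation from \cite{AMY1} and gives no proof at all (the formula is the standard Park--Sakane-type Ricci formula used there). Your derivation is essentially the standard one behind the cited result, and its steps check out: the unimodular Ricci formula $\mathrm{Ric}(f,f)=-\tfrac12\sum_\alpha|[f,f_\alpha]|^2+\tfrac14\sum_{\alpha,\beta}\langle[f_\alpha,f_\beta],f\rangle^2+\tfrac12 B(f,f)$ is the correct specialization of Lemma~\ref{Ziller} to a compact (hence unimodular) group, the middle term arising because $-\tfrac12\,\mathrm{tr}\,(\mathrm{ad}f)^2=-\tfrac12\,\mathrm{Kill}(f,f)=\tfrac12 B(f,f)$ independently of the metric used to compute the trace; the conversion factor $\sqrt{y_k/(y_iy_j)}$ is right; and reading $r_k$ as $\mathrm{Ric}(f,f)$ on $\langle\ ,\ \rangle$-unit vectors $f\in{\mathfrak w}_k$ is the normalization forced by your own consistency check (all $y_k=1$ giving $r_k=\tfrac14$), so the averaging over a basis of ${\mathfrak w}_k$ with the $1/d_k$ factor is legitimate once $r$ is known to be of the form~(\ref{non-natural}), which the paper asserts just before the lemma.

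One small refinement is worth making to your last step. Your proof of $\sum_{i,j}\left[\begin{smallmatrix} j \\ ki \end{smallmatrix}\right]=d_k$ goes through the Casimir operator and Schur's lemma, hence uses irreducibility of the adjoint representation, i.e.\ that ${\mathfrak g}$ is \emph{simple}; but the lemma is stated in the setting where $G$ is compact connected \emph{semisimple}. This is harmless for the paper's application (${\mathfrak g}=F_4$), and in any case the detour through the Casimir is avoidable: since $B$ is exactly the negative Killing form, for each fixed $\beta$ one has
\begin{equation*}
\sum_{\alpha,\gamma}(A^\gamma_{\alpha\beta})^2=\sum_\alpha B([e_\alpha,e_\beta],[e_\alpha,e_\beta])=-\sum_\alpha B\bigl(e_\alpha,(\mathrm{ad}\,e_\beta)^2e_\alpha\bigr)=-\mathrm{Kill}(e_\beta,e_\beta)=B(e_\beta,e_\beta)=1,
\end{equation*}
and summing over the $d_k$ indices $\beta$ with $e_\beta\in{\mathfrak w}_k$ gives the identity directly, with no appeal to irreducibility or simplicity.
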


Based on the above lemma and a further discussion for K\"ahler
C-spaces,

\begin{theorem}[\cite{AMY1}]
The compact simple Lie groups $SO(n)$ for $n\geq 11$, $Sp(n)$ for
$n\geq 3$, $E_6$, $E_7$, and $E_8$ admit non-naturally reductive
Einstein metrics.
\end{theorem}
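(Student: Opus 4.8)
The plan is to realize each of the listed groups $G$ as the total space of an ansatz built from a generalized flag manifold (a K\"ahler C-space) $G/H$, and to solve the Einstein equations in the form supplied by Lemma~\ref{lemma3.3}. First I would choose, for each $G$ in the list, a closed subgroup $H$ that is the centralizer of a torus, so that $H=H_0\cdot H_1\cdots H_p$ with $H_0$ a one-dimensional center (hence $\dim\mathfrak h_0\le 1$), and so that the $B$-orthogonal complement $\mathfrak m$ decomposes under $Ad(H)$ into \emph{mutually inequivalent} irreducible summands $\mathfrak m_1\oplus\cdots\oplus\mathfrak m_q$ with $q\ge 2$. Relabelling the ideals and summands as $\mathfrak w_0,\ldots,\mathfrak w_{p+q}$ as in the excerpt, I then take the left-invariant metric $\langle\,,\,\rangle=\sum_k y_k\,B|_{\mathfrak w_k}$ with independent positive parameters $y_k$; this is exactly the $Ad(H)$-invariant family~(\ref{non-natural}) whose Ricci tensor is diagonal. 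The dimension restrictions $n\ge 11$ for $SO(n)$ and $n\ge 3$ for $Sp(n)$ should emerge from requiring a flag manifold with the needed number of inequivalent summands and a positive Einstein solution to exist.

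Second, I would compute the structure constants $\left[\begin{smallmatrix} k \\ ij \end{smallmatrix}\right]$ for this decomposition. For a K\"ahler C-space these are governed by the $T$-roots attached to the painted Dynkin diagram, and the relevant triples $(i,j,k)$ together with their values are determined by the root-addition rules; the identity $\sum_{i,j}\left[\begin{smallmatrix} j \\ ki \end{smallmatrix}\right]=d_k$ from Lemma~\ref{lemma3.3} furnishes a running check. Substituting these into the formula for $r_k$ expresses each Ricci component as an explicit rational function of $y_0,\ldots,y_{p+q}$.

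Third, I would impose the Einstein condition $r_0=r_1=\cdots=r_{p+q}$. After scaling away the overall homothety (say $y_0=1$), this becomes a polynomial system in the remaining $y_k$, which I would analyze by a Gr\"obner-basis computation to exhibit at least one solution with every $y_k>0$. The reason for allowing distinct coefficients on the summands $\mathfrak m_j$ is precisely that the solution set properly contains, and should exceed, the naturally reductive family; if the solution forces all the $x_j$ equal it would only reproduce the form~(\ref{natural}).

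The main obstacle is the final step: showing that some such Einstein solution is \emph{not} naturally reductive. By the D'Atri--Ziller theorem quoted above, natural reducibility of a left-invariant metric on the compact simple group $G$ forces the form~(\ref{natural}) for \emph{some} closed subgroup $H'$; in particular the metric must be a single scalar multiple of $B$ on the entire $B$-complement $\mathfrak m'$ of $\mathfrak h'$, and a scalar on each simple ideal of $\mathfrak h'$. I would therefore confront the $B$-eigenvalue pattern of the constructed solution with this normal form: if the values on the $\mathfrak m_j$ are genuinely distinct, then the largest $B$-eigenspace of the metric is too small to serve as such an $\mathfrak m'$, and the number of distinct eigenvalues bounds the number of ideals of any admissible $H'$. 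The remaining candidate subgroups must then be excluded by a dimension count together with the requirement that the non-$\mathfrak m'$ eigenspaces assemble into a reductive decomposition of an actual subalgebra. Performing this elimination uniformly across the infinite families $SO(n)$ and $Sp(n)$, rather than dimension by dimension, is where the genuine difficulty lies.
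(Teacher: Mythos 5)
You should first be aware that this theorem is not proved in the paper at all: it is quoted from \cite{AMY1}, and the only indication of its proof is the preceding sentence, namely that it rests on Lemma~\ref{lemma3.3} together with ``a further discussion for K\"ahler C-spaces.'' Your proposal reconstructs exactly that route --- flag manifolds $G/H$ with $H$ the centralizer of a torus, the $\mathrm{Ad}(H)$-invariant family (\ref{non-natural}), the Ricci components from Lemma~\ref{lemma3.3}, and the D'Atri--Ziller classification to rule out natural reducibility --- so in outline you are on the same path as the cited source, and it is the right path.

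Two execution points, however, would not survive as you state them, and it is worth recording how \cite{AMY1} actually handles them. First, for the infinite families $SO(n)$, $n\geq 11$, and $Sp(n)$, $n\geq 3$, a Gr\"obner-basis computation is a per-dimension tool and cannot establish existence of positive solutions for all $n$ at once; this uniformity problem afflicts your existence step just as much as your final elimination step, though you only flag it for the latter. In \cite{AMY1} it is circumvented by restricting to K\"ahler C-spaces with exactly \emph{two} irreducible isotropy summands ($q=2$), so that the Einstein system has coefficients that are explicit rational functions of $n$ (through the dimensions $d_1,d_2$), and positivity of a root can be argued uniformly in $n$; moreover the structure constants are not obtained by enumerating $T$-roots but are extracted from the existence of the K\"ahler--Einstein metric on $G/H$ together with the known naturally reductive Einstein metrics of \cite{DZ1}, which yield linear relations determining them. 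Second, your closing elimination over all candidate subgroups $H'$ --- the step you rightly call the crux --- is not done there by an eigenvalue/dimension count repeated across cases; instead one proves, once and for all, a characterization lemma for the family (the exact analogue of Lemma~\ref{natural2} in this paper): a metric of the form (\ref{non-natural}) can be naturally reductive with respect to some $G\times L$ only if specific coincidences among the parameters hold, the proof resting on the fact that very few of the subspaces assembled from $\mathfrak h$ and the $\mathfrak m_j$ are actually subalgebras of $\mathfrak g$. This is the same mechanism as your eigenspace argument, but packaged so that exhibiting a single Einstein solution violating those coincidences finishes the proof. With these two repairs your plan becomes, in substance, the proof given in \cite{AMY1}.
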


\section{A Ricci curvature formula associated with an involution pair}
The Lie algebra $\mathfrak h$ of the compact subgroup $H$ in the
work of Mujtaba and Pope is the fixed points of an
involution of the Lie algebra ${\mathfrak g}$ of the compact simple
Lie group $G$.

The Lie subalgebra $\mathfrak h$ in \cite{AMY1} is also associated
to the involution of the Lie algebra $\mathfrak g$. In fact, Let
$\Pi=\{\alpha_1, \cdots, \alpha_l\}$ be a fundamental system of
$\Delta$, where $\Delta$ is the root system of $\mathfrak g$
corresponding to a maximal abelian subalgebra $\mathfrak t$. Let
$\phi=\sum_{j=1}^lm_j\alpha_j$ be the highest root. First choose a
simple root $\alpha_i$ such that $m_i=2$. Then there exists an
element $h_1$ of $\mathfrak t$ such that
$$\alpha_i(h_1)=\pi\sqrt{-1}, \quad \alpha_j(h_1)=0 \text{ if }
j\not=i.$$ Then $\rho=e^{\mathrm{ad}h_1}$ is an involution of
$\mathfrak g$. Let ${\mathfrak k}=\{x\in {\mathfrak g}|\rho(x)=x\}$
and ${\mathfrak m}=\{x\in {\mathfrak g}|\rho(x)=-x\}$. Then
$\Pi-\{\alpha_i\}\cup\{-\phi\}$ is a fundamental system of
$\mathfrak k$. There exists an element $h_2$ of $\mathfrak t$ such
that $$-\phi(h_2)=\pi\sqrt{-1},\quad \alpha_j(h_2)=0 \text{ if }
j\not=i.$$ Then $\varphi=e^{\mathrm{ad}h_2}$ is an involution of
$\mathfrak k$, and
$${\mathfrak h}=\{x\in {\mathfrak k}|\varphi(x)=x\}.$$
Furthermore, $\varphi$ can be extended to an automorphism of
$\mathfrak g$ satisfying
$$\alpha_i(h_2)=-\frac{\pi}{2}\sqrt{-1}, \quad \alpha_j(h_2)=0 \text{ if }
j\not=i.$$ It means that $\varphi^4=id$ as a linear map on
${\mathfrak g}$. By a result of Yan \cite{Ya1}, if an
involution $\varphi$ of $\mathfrak k$ can be extended to be an
automorphism of $\mathfrak g$, also denoted by $\varphi$, then
$$\varphi^4=id \text{ or } \varphi^2=id.$$

\begin{remark}
The cases given in \cite{AMY1} are just a part of the cases for
$\varphi^4=id$.
\end{remark}

\begin{remark}
For the latter, it corresponds to the commuting involutions on
compact simple Lie algebras, which leads to an classification of the
simple locally symmetric pairs. For the details, see
\cite{CL,CH,Ya1}.
\end{remark}

Let $\rho,\varphi$ be involutions of $\mathfrak g$ satisfying $\rho\varphi=\varphi\rho$. Then we have a decomposition
$${\mathfrak g}={\mathfrak h_1}\oplus{\mathfrak h_2}\oplus{\mathfrak h_3}\oplus{\mathfrak h_4},$$
corresponding to $\rho,\varphi$, where \begin{eqnarray*}
  && {\mathfrak h_1}=\{x\in {\mathfrak
g}|\rho(x)=x, \varphi(x)=x\}, \quad {\mathfrak h_2}=\{x\in
{\mathfrak
g}|\rho(x)=x, \varphi(x)=-x\},\\
  &&{\mathfrak h_3}=\{x\in {\mathfrak
g}|\rho(x)=-x, \varphi(x)=x\},\quad {\mathfrak h_4}=\{x\in
{\mathfrak g}|\rho(x)=-x, \varphi(x)=-x\}.
\end{eqnarray*}
Let $B$ be an associative inner product on $\mathfrak g$. It is
easy to check that $B({\mathfrak h_i}, {\mathfrak h_j})=0 \text{ if
} j\not=i$ and
\begin{eqnarray*}
  && [{\mathfrak h_1}, {\mathfrak h_i}]\subset {\mathfrak h_i},
  \quad i=1,2,3,4;\quad [{\mathfrak h_2}, {\mathfrak h_2}]\subset {\mathfrak
  h_1};
  \quad [{\mathfrak h_2}, {\mathfrak h_3}]\subset {\mathfrak h_4};
  \\
  && [{\mathfrak h_2}, {\mathfrak h_4}]\subset {\mathfrak h_3};
  \quad [{\mathfrak h_3}, {\mathfrak h_3}]\subset {\mathfrak h_1};
  \quad  [{\mathfrak h_3}, {\mathfrak h_4}]\subset {\mathfrak h_2};
  \quad [{\mathfrak h_4}, {\mathfrak h_4}]\subset {\mathfrak
  h_1}.
\end{eqnarray*}
We consider the following left invariant metric on
$G$:
\begin{equation}\label{involutionpair}
\langle\ ,\ \rangle=u_1\cdot B|_{\mathfrak h_1}+u_2\cdot
B|_{\mathfrak h_2}+u_3\cdot B|_{\mathfrak h_3}+u_4\cdot
B|_{\mathfrak h_4},\end{equation} where $u_1, \cdots, u_4\in
{\mathbb R}^+$.

\begin{lemma}\label{natural2}
If a left invariant metric $\langle\ ,\ \rangle$ of the form
$(\ref{involutionpair})$ on $G$ is naturally reductive with respect
to $G\times L$ for some closed subgroup $L$ of $G$, then one of the
following holds:
\begin{enumerate}
  \item $u_2=u_3=u_4$, or
  \item Assume that $\{i_2,i_3,i_4\}=\{2,3,4\}$. Then $u_1=u_{i_2}$ and $u_{i_3}=u_{i_4}$, or
  \item $u_1=u_2=u_3=u_4$, which implies that
  $(\ref{involutionpair})$ is a bi-invariant metric.
\end{enumerate}
Conversely,
\begin{enumerate}
  \item If $u_2=u_3=u_4$, then the metric $\langle\ ,\ \rangle$ is given by
  $u_1\cdot B|_{\mathfrak h_1}+u_2\cdot B|_{{\mathfrak h}_2\oplus {\mathfrak h}_3\oplus {\mathfrak h}_4}$, which is
naturally reductive with respect to $G\times H_1$, where the
Lie algebra of $H_1$ is ${\mathfrak h}_1$.
  \item If $u_1=u_{i_2}$ and $u_{i_3}=u_{i_4}$, then the metric $\langle\ ,\ \rangle$ is given by
  $u_1\cdot B|_{{\mathfrak h}_1\oplus {\mathfrak h}_{i_2}}+u_{i_3}\cdot B|_{{\mathfrak h}_{i_3}\oplus {\mathfrak h}_{i_4}}$,
  which is naturally reductive with respect to $G\times K$, where the Lie algebra of $K$ is ${\mathfrak h}_1\oplus {\mathfrak h}_{i_2}$.
\end{enumerate}
\end{lemma}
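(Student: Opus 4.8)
The plan is to prove the two directions separately, with the D'Atri--Ziller theorem (the first Theorem of Section~2) doing the main work. For the converse I would simply exhibit each metric as an instance of the form (\ref{natural}). If $u_2=u_3=u_4$, then $\mathfrak h_1=\{x:\rho(x)=x,\ \varphi(x)=x\}$ is a subalgebra whose $B$-orthogonal complement is $\mathfrak h_2\oplus\mathfrak h_3\oplus\mathfrak h_4$, and the metric equals $u_1\cdot B|_{\mathfrak h_1}+u_2\cdot B|_{\mathfrak h_2\oplus\mathfrak h_3\oplus\mathfrak h_4}$, which is of the form (\ref{natural}) with $H=H_1$. If instead $u_1=u_{i_2}$ and $u_{i_3}=u_{i_4}$, the key observation is that $\mathfrak h_1\oplus\mathfrak h_{i_2}$ is exactly the fixed-point algebra of one of $\rho,\varphi,\rho\varphi$ (according as $i_2=2,3,4$), hence a subalgebra $\mathfrak k$ with $B$-complement $\mathfrak h_{i_3}\oplus\mathfrak h_{i_4}$, and $u_1\cdot B|_{\mathfrak h_1\oplus\mathfrak h_{i_2}}+u_{i_3}\cdot B|_{\mathfrak h_{i_3}\oplus\mathfrak h_{i_4}}$ is again of the form (\ref{natural}) with $H=K$. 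In both cases the first Theorem yields natural reductivity with respect to $G\times H_1$ and $G\times K$ respectively.

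For the forward direction, suppose $\langle\ ,\ \rangle$ is naturally reductive with respect to some $G\times L$. Since $G$ is simple, the converse half of the D'Atri--Ziller theorem produces a $B$-orthogonal splitting $\mathfrak g=\mathfrak l\oplus\mathfrak n$, with $\mathfrak l=\mathrm{Lie}(L)$ (center plus simple ideals $\mathfrak l=\mathfrak l_0\oplus\mathfrak l_1\oplus\cdots\oplus\mathfrak l_p$) and $\mathfrak n=\mathfrak l^{\perp_B}$, for which $\langle\ ,\ \rangle=x\cdot B|_{\mathfrak n}+A_0|_{\mathfrak l_0}+\sum_j x_j\cdot B|_{\mathfrak l_j}$. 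I would encode the metric in the $B$-self-adjoint operator $\Phi$ defined by $\langle X,Y\rangle=B(\Phi X,Y)$; thus $\Phi=u_i\,\mathrm{id}$ on $\mathfrak h_i$, whereas $\Phi|_{\mathfrak n}=x\,\mathrm{id}$ is scalar and $\Phi|_{\mathfrak l}$ is $\mathrm{Ad}(L)$-invariant. Since $\mathrm{ad}(X)$ for $X\in\mathfrak l$ leaves both $\mathfrak l$ and $\mathfrak n$ invariant and commutes with $\Phi$ on each summand, this gives the clean reformulation $\mathfrak l\subseteq\mathcal A$, where $\mathcal A:=\{X\in\mathfrak g:[\mathrm{ad}(X),\Phi]=0\}$.

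The crux is to determine $\mathcal A$. Decomposing $X=\sum_a X_a$ with $X_a\in\mathfrak h_a$ and using the $\mathbb Z_2\times\mathbb Z_2$-grading recorded in the bracket relations displayed before (\ref{involutionpair}), one sees that $[\mathrm{ad}(X_a),\Phi]$ acts on $\mathfrak h_b$ as multiplication by $u_{a\ast b}-u_b$, where $a\ast b$ is the grading product. Hence $\mathcal A$ is graded, always contains $\mathfrak h_1$, and for $a\ne1$ an element $X_a$ lies in $\mathcal A$ only if $[X_a,\mathfrak h_b]=0$ whenever $u_{a\ast b}\ne u_b$. Into this I would feed two structural inputs: the involutions are inner, $\rho=e^{\mathrm{ad}h_1}$ and $\varphi=e^{\mathrm{ad}h_2}$ with $h_1,h_2\in\mathfrak t$, so the maximal torus $\mathfrak t$ sits inside $\mathfrak h_1$ and each $\mathfrak h_i$ is a sum of root spaces, whence $[\mathfrak h_1,\mathfrak h_i]\ne0$ for every nonzero $\mathfrak h_i$; and $\mathfrak g$ is simple, so that in each of the symmetric pairs attached to $\rho,\varphi,\rho\varphi$ no nonzero element of the fixed-point algebra centralizes the $(-1)$-eigenspace.

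Finally I would run through the equality patterns of $u_1,u_2,u_3,u_4$, always deducing $\mathfrak n$ from $\mathfrak l\subseteq\mathcal A$ and then invoking scalarity of $\Phi|_{\mathfrak n}$. When the $u_i$ are all distinct, $\mathcal A=\mathfrak h_1$, forcing $\mathfrak h_2\oplus\mathfrak h_3\oplus\mathfrak h_4\subseteq\mathfrak n$ and hence $u_2=u_3=u_4$, a contradiction; a single coincidence $u_1=u_j$ likewise drives two blocks of different eigenvalue into $\mathfrak n$, while a single coincidence $u_i=u_j$ with $i,j\ne1$ would require $[\mathfrak h_{i\ast j},\mathfrak h_1]=0$, which the torus input forbids. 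The surviving possibilities are $u_2=u_3=u_4$, the two-pair pattern $u_1=u_{i_2}$, $u_{i_3}=u_{i_4}$, and all $u_i$ equal (the degenerate case $\mathfrak n=0$, $L=G$, giving the bi-invariant metric). I expect the genuine obstacle to be the triple pattern $u_1=u_i=u_j\ne u_k$: here $\mathcal A$ is too large for $\mathfrak l$ to be read off immediately, and one must first show $\mathfrak n=\mathfrak h_k$ and $\mathfrak l=\mathfrak h_1\oplus\mathfrak h_i\oplus\mathfrak h_j$, then use that $\mathfrak h_k$ would centralize the isotropy space $\mathfrak h_i\oplus\mathfrak h_j$ of the symmetric pair for $\rho$, $\varphi$, or $\rho\varphi$, forcing $\mathfrak h_k=0$ and contradicting the standing assumption (valid in the $F_4$ setting) that $\mathfrak h_1,\dots,\mathfrak h_4$ are all nonzero.
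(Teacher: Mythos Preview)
Your overall strategy is sound and in fact more thorough than the paper's own argument, which is a one-line appeal to the D'Atri--Ziller theorem together with the observation that among the pieces $\mathfrak h_1,\dots,\mathfrak h_4$ and their sums, only $\mathfrak h_1$, $\mathfrak h_1\oplus\mathfrak h_i$ (for $i=2,3,4$) and $\mathfrak g$ are subalgebras. The paper leaves implicit why the subalgebra produced by D'Atri--Ziller must be compatible with the $\mathfrak h_i$-decomposition; your device of the metric operator $\Phi$ and its centralizer $\mathcal A$ makes this explicit, since $\mathfrak l\subseteq\mathcal A$ forces $\mathfrak n\supseteq\mathcal A^{\perp_B}$ and then the scalarity of $\Phi|_{\mathfrak n}$ yields the desired equalities among the $u_i$. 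The converse direction you handle exactly as the paper intends.

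There is one genuine wrinkle in your case analysis. In the triple pattern $u_1=u_i=u_j\ne u_k$ you describe an argument that would first establish $\mathfrak l=\mathfrak h_1\oplus\mathfrak h_i\oplus\mathfrak h_j$ and then derive that $\mathfrak h_k$ centralizes $\mathfrak h_i\oplus\mathfrak h_j$. Neither step is available: $\mathfrak h_1\oplus\mathfrak h_i\oplus\mathfrak h_j$ is not a subalgebra (since $[\mathfrak h_i,\mathfrak h_j]\subseteq\mathfrak h_k$), and nothing about the naturally reductive structure forces $\mathfrak n$ to centralize anything. Fortunately the obstacle is illusory, because the symmetric-pair input you already recorded does the job directly. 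For $X_i\in\mathfrak h_i\cap\mathcal A$ one needs $[X_i,\mathfrak h_j]=[X_i,\mathfrak h_k]=0$, i.e.\ $X_i\in\mathfrak h_1\oplus\mathfrak h_i$ centralizes the $(-1)$-eigenspace $\mathfrak h_j\oplus\mathfrak h_k$ of the involution fixing $\mathfrak h_1\oplus\mathfrak h_i$, hence $X_i=0$; similarly $\mathcal A\cap\mathfrak h_j=0$, and $\mathcal A\cap\mathfrak h_k=0$ by the torus input. Thus $\mathcal A=\mathfrak h_1$, so $\mathfrak n\supseteq\mathfrak h_i\oplus\mathfrak h_j\oplus\mathfrak h_k$ and scalarity of $\Phi|_{\mathfrak n}$ gives $u_i=u_j=u_k$, contradicting $u_j\ne u_k$. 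With this correction your proof goes through and gives a complete justification of what the paper states in one sentence.
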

\begin{proof}
The lemma follows form Theorem~\ref{natural} and the fact that
only $\mathfrak h_1$ and ${\mathfrak h}_1\oplus {\mathfrak h}_i$ for
some i are subalgebras of $\mathfrak g$.
\end{proof}
\begin{lemma}
The Levi-Civita connection corresponding to the above metric is
\[ \nabla_xy=\left\{ \begin{aligned}
  &\frac{1}{2}[x,y],  \quad\forall x,y\in {\mathfrak h}_i; \\
&\frac{2u_i-u_1}{2u_i}[x,y],  \quad\forall x\in {\mathfrak h}_1, y\in {\mathfrak h}_i, i>1; \\
&\frac{u_3+u_4-u_2}{2u_4}[x,y],  \quad\forall x\in {\mathfrak h}_2, y\in
{\mathfrak h}_3; \\
&\frac{u_3+u_4-u_2}{2u_3}[x,y],  \quad\forall x\in {\mathfrak h}_2, y\in
{\mathfrak h}_4; \\
&\frac{u_2+u_4-u_3}{2u_2}[x,y],  \quad\forall x\in {\mathfrak h}_3, y\in
{\mathfrak h}_4;
\end{aligned}\right.\]
and $[x,y]=\nabla_xy-\nabla_yx$.
\end{lemma}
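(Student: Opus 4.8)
The plan is to read off $\nabla$ directly from the Koszul formula. For left-invariant vector fields $x,y,z$ on $G$ carrying a left-invariant metric, the three terms of the Koszul formula that differentiate the metric vanish, since $\langle y,z\rangle$ etc.\ are constant on left-invariant fields. Hence
\begin{equation*}
2\langle\nabla_x y,z\rangle=\langle[x,y],z\rangle-\langle[y,z],x\rangle+\langle[z,x],y\rangle
\qquad(x,y,z\in{\mathfrak g}).
\end{equation*}
Because $\langle\ ,\ \rangle$ is nondegenerate, this identity determines $\nabla_x y$ once its right-hand side is known against all $z$, and I will evaluate it by letting $z$ range over the blocks ${\mathfrak h}_1,\dots,{\mathfrak h}_4$.

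Two structural inputs drive the computation. First, since $B$ is associative, the trilinear form $(x,y,z)\mapsto B([x,y],z)$ is totally skew, hence cyclically invariant: $B([x,y],z)=B([y,z],x)=B([z,x],y)$. Second, the decomposition ${\mathfrak g}={\mathfrak h}_1\oplus\cdots\oplus{\mathfrak h}_4$ is $B$-orthogonal, the metric restricts to $u_i\cdot B$ on each ${\mathfrak h}_i$, and the bracket table above sends every pair $({\mathfrak h}_i,{\mathfrak h}_j)$ into a \emph{single} block ${\mathfrak h}_k$. I would therefore fix $x\in{\mathfrak h}_i$, $y\in{\mathfrak h}_j$, locate the block ${\mathfrak h}_k\ni[x,y]$ from the table, and test the Koszul identity against each $z$: one checks the right-hand side vanishes whenever $z\notin{\mathfrak h}_k$ (forcing $\nabla_x y\in{\mathfrak h}_k$), while for $z\in{\mathfrak h}_k$ each of the three terms becomes $u_\bullet\,B$ of a bracket lying in a definite block, so after applying cyclic invariance to pull out the common factor $B([x,y],z)$ the scalar can be read off.

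Carrying this out case by case: for $x,y\in{\mathfrak h}_i$ one has $[x,y]\in{\mathfrak h}_1$ and the three terms contribute weights $u_1,u_i,u_i$ with signs $+,-,+$, giving $(2\langle\nabla_x y,z\rangle)=u_1B([x,y],z)$ and hence $\nabla_x y=\tfrac12[x,y]$. For $x\in{\mathfrak h}_1$, $y\in{\mathfrak h}_i$ ($i>1$) the weights are $u_i,u_1,u_i$, yielding the coefficient $(2u_i-u_1)/(2u_i)$. For the three genuinely mixed cases among ${\mathfrak h}_2,{\mathfrak h}_3,{\mathfrak h}_4$ the two subtracted/added brackets land in the two remaining blocks, so the weights combine to $u_3+u_4-u_2$ (resp.\ $u_2+u_4-u_3$), divided by twice the weight of the target block, reproducing each displayed coefficient. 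Finally, swapping the roles of $x$ and $y$ in the mixed cases and adding the results returns $\nabla_x y-\nabla_y x=[x,y]$, confirming the torsion-free identity asserted at the end.

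The computation has no serious obstacle; the only delicate point is bookkeeping, namely correctly identifying which weight $u_\bullet$ multiplies each of the three Koszul terms. That weight is governed by the target block of the corresponding bracket via the structure table, and this is precisely where an index or sign slip would propagate, so the case analysis must be tabulated carefully rather than by analogy.
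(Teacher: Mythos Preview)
Your proposal is correct and is precisely the ``direct computation'' the paper alludes to: the Koszul formula for left-invariant fields together with the $B$-associativity and the bracket table yields each coefficient exactly as you describe. There is no alternative route in the paper to compare against; you have simply supplied the details the authors omitted.
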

\begin{proof}
The lemma follows from direct computations.
\end{proof}

Let $\{e_1,\cdots,e_n\}$ be an orthogonal basis of ${\mathfrak g}$
with respect to $B$, where $\{e_1,\cdots,e_{i_1}\}$ is the basis of
${\mathfrak h}_1$, $\{e_{i_1}+1,\cdots,e_{i_2}\}$ is the basis of
${\mathfrak h}_2$, $\{e_{i_2}+1,\cdots,e_{i_3}\}$ is the basis of
${\mathfrak h}_3$ and $\{e_{i_3}+1,\cdots,e_{n}\}$ is the basis of
${\mathfrak h}_4$. Let $i_0=0$ and $i_4=n$. Let
$\{e_1^*,\cdots,e_n^*\}$ denote the dual basis of
$\{e_1,\cdots,e_n\}$ corresponding to the left invariant metric
$(\ref{involutionpair})$. Then
\begin{equation}e_k^*=\frac{1}{u_{j+1}}e_k, \text{ for any } k\in
{i_j+1,\cdots,i_{j+1}}.\end{equation}
\begin{proposition}\label{Ricci}
The Ricci curvature is given as follows.
\begin{enumerate}
  \item $\mathrm{Ric}(x,y)=0$, for any $x\in {\mathfrak h}_i, y\in {\mathfrak
  h}_j$ if $i\not=j;$
  \item For any $x,y\in {\mathfrak h}_1,$
$\mathrm{Ric}(x,y)=\sum_{j=0}^3-\frac{u_1}{4u_{j+1}^2}\langle\sum_{k={i_j+1}}^{{i_{j+1}}}\mathrm{ad}^2e_k(x),y\rangle;$
  \item For any $x,y\in {\mathfrak h}_2,$
  \begin{eqnarray*}
  &&\mathrm{Ric}(x,y)\\&=&-\frac{u_1}{4u_{2}^2}\langle\sum_{k=1}^{i_1}\mathrm{ad}^2e_k(x),y\rangle-\frac{4u_2-3u_1}{4u_2^2}\langle\sum_{k=i_1+1}^{i_2}\mathrm{ad}^2e_k(x),y\rangle
  \\
  &&+\frac{2u_4(u_4-u_2-u_3)+(u_4-u_2+u_3)(u_4+u_2-u_3)}{4u_2u_3u_4}\langle\sum_{k={i_2+1}}^{i_3}\mathrm{ad}^2e_k(x),y\rangle\\
  &&+\frac{2u_3(u_3-u_2-u_4)+(u_3-u_2+u_4)(u_3+u_2-u_4)}{4u_2u_3u_4}\langle\sum_{k={i_3+1}}^{i_4}\mathrm{ad}^2e_k(x),y\rangle;
                                         \end{eqnarray*}
  \item For any $x,y\in {\mathfrak h}_3,$
  \begin{eqnarray*}
  &&\mathrm{Ric}(x,y)\\
  &=&-\frac{u_1}{4u_{3}^2}\langle\sum_{k=1}^{i_1}\mathrm{ad}^2e_k(x),y\rangle-\frac{4u_3-3u_1}{4u_3^2}\langle\sum_{k={i_2+1}}^{i_3}\mathrm{ad}^2e_k(x),y\rangle\\
                   &&+
                   \frac{2u_4(u_4-u_2-u_3)+(u_4-u_2+u_3)(u_4+u_2-u_3)}{4u_2u_3u_4}\langle\sum_{k={i_1+1}}^{i_2}\mathrm{ad}^2e_k(x),y\rangle
                   \\
                   &&+\frac{2u_2(u_2-u_3-u_4)+(u_2-u_3+u_4)(u_2+u_3-u_4)}{4u_2u_3u_4}\langle\sum_{k={i_3+1}}^{i_4}\mathrm{ad}^2e_k(x),y\rangle;\end{eqnarray*}
  \item For any $x,y\in {\mathfrak h}_4,$ \begin{eqnarray*}
  &&\mathrm{Ric}(x,y)\\&=&-\frac{u_1}{4u_{4}^2}\langle\sum_{k=1}^{i_1}\mathrm{ad}^2e_k(x),y\rangle-\frac{4u_4-3u_1}{4u_4^2}\langle\sum_{k={i_3+1}}^{i_4}\mathrm{ad}^2e_k(x),y\rangle\\
                   &&+
                   \frac{2u_3(u_3-u_2-u_4)+(u_3-u_2+u_4)(u_3+u_2-u_4)}{4u_2u_3u_4}\langle\sum_{k={i_1+1}}^{i_2}\mathrm{ad}^2e_k(x),y\rangle
                   \\
                   &&+\frac{2u_2(u_2-u_3-u_4)+(u_2-u_3+u_4)(u_2+u_3-u_4)}{4u_2u_3u_4}\langle\sum_{k={i_2+1}}^{i_3}\mathrm{ad}^2e_k(x),y\rangle;\end{eqnarray*}
\end{enumerate}
\end{proposition}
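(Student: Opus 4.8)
The plan is to evaluate the trace in Lemma~\ref{Ziller} block by block against the $B$-orthonormal basis adapted to $\mathfrak g=\mathfrak h_1\oplus\mathfrak h_2\oplus\mathfrak h_3\oplus\mathfrak h_4$. Writing $U_x:=\nabla_x-\mathrm{ad}\,x$ so that $\mathrm{Ric}(x,y)=-\mathrm{tr}(U_xU_y)$, the first observation is that, since $\nabla$ is torsion-free, $U_x(z)=\nabla_x z-[x,z]=\nabla_z x$. Thus $U_x$ is the operator $z\mapsto\nabla_z x$, which the Levi--Civita lemma above computes explicitly once one also derives the ``reversed'' cases (e.g.\ $\nabla_z x$ for $z\in\mathfrak h_4,\ x\in\mathfrak h_2$) from $[z,x]=\nabla_z x-\nabla_x z$.

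First I would tabulate $U_x$ for $x$ in each $\mathfrak h_i$. Reading off the connection formulas, $U_x$ carries each block $\mathfrak h_j$ into a single block determined by the bracket relations, times an explicit scalar. For instance, for $x\in\mathfrak h_2$ one finds $U_x z=\tfrac{u_1-2u_2}{2u_2}[x,z]$, $-\tfrac12[x,z]$, $\tfrac{u_3-u_4-u_2}{2u_4}[x,z]$, $\tfrac{u_4-u_3-u_2}{2u_3}[x,z]$ according as $z\in\mathfrak h_1,\mathfrak h_2,\mathfrak h_3,\mathfrak h_4$, so $U_x$ sends $\mathfrak h_1\leftrightarrow\mathfrak h_2$ and $\mathfrak h_3\leftrightarrow\mathfrak h_4$; for $x\in\mathfrak h_1$ it instead preserves every block.

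Next, for $x,y$ in the same $\mathfrak h_i$, I would compose these tables. The composite $U_xU_y$ then preserves each block $\mathfrak h_j$, so $\mathrm{tr}(U_xU_y)=\sum_{j}\mathrm{tr}_{\mathfrak h_j}(U_xU_y)$, each summand being the product of the two block-scalars times $\sum_{e_k\in\mathfrak h_j}B([x,[y,e_k]],e_k)$. Using associativity $B([a,b],c)=B(a,[b,c])$ one rewrites this as $\sum_{e_k\in\mathfrak h_j}B([e_k,[e_k,x]],y)$, and the dual-basis relation $e_k^*=u_m^{-1}e_k$ turns it into $u_m^{-1}\langle\sum_{e_k\in\mathfrak h_j}\mathrm{ad}^2 e_k(x),y\rangle$, where $m$ indexes the block containing $[e_k,[e_k,x]]$. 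Collecting scalars yields the four coefficients of each of parts (2)--(5); for $x\in\mathfrak h_1$ this gives directly the uniform coefficient $-\tfrac{u_1}{4u_{j+1}^2}$ of part (2). Part (1) is immediate: for $x\in\mathfrak h_i,\ y\in\mathfrak h_j$ with $i\ne j$ the map $U_xU_y$ sends every block to a different block, so all diagonal blocks vanish and the trace is $0$.

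I expect the main obstacle to be the bookkeeping in parts (3)--(5), together with one genuine normalization subtlety. Because $\mathrm{ad}\,x$ for $x\in\mathfrak h_2$ interchanges $\mathfrak h_1\leftrightarrow\mathfrak h_2$ and $\mathfrak h_3\leftrightarrow\mathfrak h_4$ as mutually $B$-adjoint maps, the trace identity $\mathrm{tr}(AB)=\mathrm{tr}(BA)$ forces $\langle\sum_{e_k\in\mathfrak h_1}\mathrm{ad}^2 e_k(x),y\rangle=\langle\sum_{e_k\in\mathfrak h_2}\mathrm{ad}^2 e_k(x),y\rangle$, and likewise for $\mathfrak h_3,\mathfrak h_4$. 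The raw trace computation assigns equal coefficients to the $\mathfrak h_1$- and $\mathfrak h_2$-sums (and to the $\mathfrak h_3$- and $\mathfrak h_4$-sums), whereas the statement records asymmetric coefficients; the two presentations agree only after transferring weight across this identity, so the content to be verified is that they coincide on the invariant combinations, namely the sums over $\mathfrak h_1\oplus\mathfrak h_2$ and over $\mathfrak h_3\oplus\mathfrak h_4$. Checking that the quadratic expressions in $u_2,u_3,u_4$ match after this regrouping is the one delicate algebraic step; everything else is routine composition of the connection scalars.
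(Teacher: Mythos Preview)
Your approach is correct but differs from the paper's. The paper does \emph{not} use Lemma~\ref{Ziller}; instead it evaluates the Ricci tensor directly from the curvature formula
\[
\mathrm{Ric}(x,y)=\sum_{k}\frac{1}{u_{j+1}}\bigl\langle \nabla_{e_k}\nabla_x y-\nabla_x\nabla_{e_k}y-\nabla_{[e_k,x]}y,\,e_k\bigr\rangle,
\]
computing each of the three terms block by block with the Levi--Civita lemma. Because the third term $\nabla_{[e_k,x]}y$ breaks the exchange symmetry between paired blocks, the paper's computation produces the asymmetric coefficients recorded in the statement \emph{directly}, with no extra identity needed.

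Your route via $U_x z=\nabla_z x$ and $\mathrm{Ric}(x,y)=-\mathrm{tr}(U_xU_y)$ is cleaner in that only a single covariant-derivative table must be composed with itself, and the off-diagonal vanishing in part~(1) is immediate from the block-permutation pattern. The cost is exactly what you noticed: the raw trace gives the same scalar on $\mathfrak h_1$ and $\mathfrak h_2$ (respectively $\mathfrak h_3$ and $\mathfrak h_4$), so matching the proposition's asymmetric splitting requires the identity
\[
\Bigl\langle\!\sum_{e_k\in\mathfrak h_1}\mathrm{ad}^2 e_k(x),y\Bigr\rangle
=\Bigl\langle\!\sum_{e_k\in\mathfrak h_2}\mathrm{ad}^2 e_k(x),y\Bigr\rangle
\quad(x,y\in\mathfrak h_2),
\]
which you justify via $\mathrm{tr}(AB)=\mathrm{tr}(BA)$ applied to $\mathrm{ad}\,x|_{\mathfrak h_1\to\mathfrak h_2}$ and its $B$-adjoint. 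That step is valid (and in fact the identity is equivalent to $[\!\begin{smallmatrix}2\\12\end{smallmatrix}\!]=[\!\begin{smallmatrix}1\\22\end{smallmatrix}\!]$ in the bracket notation of Lemma~\ref{lemma3.3}), and once the pairwise sums of coefficients are checked to agree---which they do---your formula and the stated one coincide as bilinear forms. So both arguments establish the proposition; the paper's buys the exact coefficients without post-processing, yours buys a shorter derivation at the price of that reconciliation.
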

\begin{proof}
Let notation be as above. Then \begin{eqnarray*}
\mathrm{Ric}(x,y)&=&
\sum_{j=0}^3\sum_{k={i_j+1}}^{i_{j+1}}\langle R(e_k,x)y, e_k^*\rangle \\
&=& \sum_{j=0}^3\sum_{k={i_j+1}}^{i_{j+1}}\frac{1}{u_{j+1}}\langle
\nabla_{e_k}\nabla_xy-\nabla_x\nabla_{e_k}y-\nabla_{[e_k,x]}y,e_k\rangle
\end{eqnarray*}
If $x\in {\mathfrak h}_i, j\in {\mathfrak h}_j$, where $i\not=j$,
then for any $z\in {\mathfrak h}_k$,
$$\nabla_{z}\nabla_xy\not\in{\mathfrak h}_k, \quad\nabla_x\nabla_{z}y\not\in{\mathfrak h}_k,\quad\nabla_{[z,x]}y\not\in{\mathfrak h}_k.$$
It follows that $\mathrm{Ric}(x,y)=0$ for any $x\in {\mathfrak h}_i,
y\in {\mathfrak h}_j$ if $i\not=j$.

Assume that $x,y\in {\mathfrak h}_1$. For any $e\in {\mathfrak
h}_i$, we have
\begin{eqnarray*}
\langle
\nabla_e\nabla_xy,e\rangle&=&\frac{1}{2}\langle\nabla_e[x,y],e\rangle=\frac{u_1}{4u_i}\langle[e,[x,y]],e\rangle=\frac{u_1}{4}B([e,[x,y]],e)\\
&=&\frac{u_1}{4}B([e,e],[x,y])=0;
\end{eqnarray*}
\begin{eqnarray*}
\langle\nabla_x\nabla_ey,e\rangle&=&\frac{(2u_i-u_1)u_1}{4u_i^2}\langle[x,[e,y]],e\rangle=\frac{(2u_i-u_1)u_1}{4u_i}B([x,[e,y]],e)\\
&=&-\frac{(2u_i-u_1)u_1}{4u_i}B(\mathrm{ad}^2e(x),y)=-\frac{2u_i-u_1}{4u_i}\langle\mathrm{ad}^2e(x),y\rangle;
\end{eqnarray*}
\begin{eqnarray*}
\langle\nabla_{[e,x]}y,e\rangle&=&\frac{u_1}{2u_i}\langle[[e,x],y],e\rangle=\frac{u_1}{2}B([[e,x],y],e)=\frac{u_1}{2}B(\mathrm{ad}^2e(x),y)\\
&=&\frac{1}{2}\langle\mathrm{ad}^2e(x),y\rangle.
\end{eqnarray*}
It follows that
$\mathrm{Ric}(x,y)=\sum_{j=0}^3-\frac{u_1}{4u_{j+1}^2}\langle\sum_{k={i_j+1}}^{{i_{j+1}}}\mathrm{ad}^2e_k(x),y\rangle$.

Assume that $x,y\in{\mathfrak h}_2$. For any $e\in {\mathfrak h}_i$,
we have $\langle\nabla_e\nabla_xy,e\rangle=0$. If $e\in {\mathfrak
h}_1$,
\begin{eqnarray*}
\langle\nabla_x\nabla_ey,e\rangle+\langle\nabla_{[e,x]}y,e\rangle&=&\frac{2u_2-u_1}{4u_2}\langle[x,[e,y]],e\rangle+\frac{1}{2}\langle[[e,x],y],e\rangle\\
              &=&\frac{u_1^2}{4u_2^2}\langle\mathrm{ad}^2e(x),y\rangle;
\end{eqnarray*}
If $e\in {\mathfrak h}_2$, then we have
\begin{eqnarray*}
\langle\nabla_x\nabla_ey,e\rangle+\langle\nabla_{[e,x]}y,e\rangle &=& \frac{u_1}{4u_2}\langle[x,[e,y]],e\rangle+\frac{2u_2-u_1}{2u_2}\langle[[e,x],y],e\rangle\\
              &=&\frac{4u_2-3u_1}{4u_2}\langle\mathrm{ad}^2e(x),y\rangle;
\end{eqnarray*}
If $e\in {\mathfrak h}_3$, then we have
\begin{eqnarray*}
&&\langle\nabla_x\nabla_ey,e\rangle+\langle\nabla_{[e,x]}y,e\rangle \\
&=&\frac{(u_2+u_4-u_3)(u_3+u_4-u_2)}{4u_3u_4}\langle[x,[e,y]],e\rangle+\frac{u_2+u_3-u_4}{2u_3}\langle[[e,x],y],e\rangle\\
              &=&\frac{2u_4(u_2+u_3-u_4)-(u_2+u_4-u_3)(u_3+u_4-u_2)}{4u_2u_4}\langle\mathrm{ad}^2e(x),y\rangle;
\end{eqnarray*}
Similarly, if $e\in {\mathfrak h}_4$, we have
\begin{eqnarray*}&&\langle\nabla_x\nabla_ey,e\rangle+\langle\nabla_{[e,x]}y,e\rangle\\
&=&\frac{2u_3(u_2+u_4-u_3)-(u_2+u_3-u_4)(u_3+u_4-u_2)}{4u_2u_3}\langle\mathrm{ad}^2e(x),y\rangle.\end{eqnarray*}
Then the formula for $\mathrm{Ric}(x,y)$ follows when $x,y\in
{\mathfrak h}_2$, and the others are similar.
\end{proof}

\section{The compact simple Lie group $G$ is $F_4$ if ${\mathfrak h}_1$ is
simple and the involutions are inner-automorphisms} Firstly, we
recall some theorems on the study on involutions. Cartan and Gantmacher made great attributions on this field, and the theory on
the extension of involutions can be found in \cite{Be2}, which is
different in the method from that in \cite{Ya1}.

Let $G$ be a compact simple Lie group with the Lie algebra
${\mathfrak g}$ and $\theta$ be an involution of $G$. Then we have a
decomposition,
$${\mathfrak g}={\mathfrak k}+{\mathfrak p},$$ where ${\mathfrak
k}=\{x\in {\mathfrak g}|\theta(x)=x\}$ and ${\mathfrak p}=\{x\in
{\mathfrak g}|\theta(x)=-x\}$. Let ${\mathfrak t}$ be a Cartan
subalgebra of ${\mathfrak g}$ containing a Cartan subalgebra of
${\mathfrak k}$. 

From now on, we assume that the involutions are inner-automorphisms.

\begin{theorem}[Gantmacher Theorem]\label{chooseH} Let the notations be as above.
Then $\theta$ is conjugate with $e^{\mathrm{ad}H}$ under
$\mathrm{Aut} {\mathfrak g}$, where $H\in {\mathfrak t}$.
\end{theorem}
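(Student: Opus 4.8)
The plan is to realize $\theta$ at the Lie-algebra level as the adjoint action of a group element lying in a maximal torus, and then transport that torus onto the prescribed $\mathfrak t$ using the conjugacy of maximal tori. Since $\theta$ is assumed to be an inner automorphism and $G$ is compact and connected, the group of inner automorphisms of $\mathfrak g$ coincides with $\mathrm{Ad}(G)$, so I may write $\theta=\mathrm{Ad}(a)$ for some $a\in G$. Note that $a$ itself need not be an involution: we only have $\mathrm{Ad}(a^2)=\theta^2=\mathrm{id}$, so $a^2$ lies in the center of $G$, which causes no trouble below.

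First I would invoke the structure theory of compact connected Lie groups: every element of $G$ lies in some maximal torus, so $a\in S$ for a maximal torus $S$ with Lie algebra $\mathfrak s$. Writing $T=\exp\mathfrak t$ for the maximal torus attached to the prescribed Cartan subalgebra $\mathfrak t$, the conjugacy of maximal tori yields $g\in G$ with $gSg^{-1}=T$, whence $gag^{-1}\in T=\exp\mathfrak t$, that is, $gag^{-1}=\exp H$ for some $H\in\mathfrak t$. Using $\mathrm{Ad}\circ\exp=\exp\circ\,\mathrm{ad}$ one then computes
\[
\mathrm{Ad}(g)\,\theta\,\mathrm{Ad}(g)^{-1}=\mathrm{Ad}\!\left(gag^{-1}\right)=\mathrm{Ad}(\exp H)=e^{\mathrm{ad}H},
\]
so $\theta$ is conjugate to $e^{\mathrm{ad}H}$ by the inner automorphism $\mathrm{Ad}(g)\in\mathrm{Inn}(\mathfrak g)\subset\mathrm{Aut}\,\mathfrak g$, with $H$ in the required Cartan subalgebra $\mathfrak t$. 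Since $\theta^2=\mathrm{id}$, its conjugate $e^{\mathrm{ad}H}$ is again an involution, which pins down $H$ by the condition $\alpha(H)\in\pi\sqrt{-1}\,\mathbb Z$ on the roots $\alpha$, matching the normalization used earlier in the construction of $h_1,h_2$.

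The hypothesis that $\mathfrak t$ contains a Cartan subalgebra of $\mathfrak k$ is not an obstruction here, since all maximal tori are conjugate and we are free to aim the conjugation at this particular $\mathfrak t$. In fact, for an inner involution one has $\mathrm{rank}\,\mathfrak k=\mathrm{rank}\,\mathfrak g$, so a Cartan subalgebra of $\mathfrak k$ is already a Cartan subalgebra of $\mathfrak g$; taking $\mathfrak t\subset\mathfrak k$ makes $\theta|_{\mathfrak t}=\mathrm{id}$, and then a character argument (an automorphism fixing $\mathfrak t$ pointwise acts on each root space $\mathfrak g_\alpha$ by a scalar $c_\alpha$ with $c_\alpha c_\beta=c_{\alpha+\beta}$, forcing $c_\alpha=e^{\alpha(H)}$) produces $H$ with $\theta=e^{\mathrm{ad}H}$ outright. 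The main point to get right is the two structural inputs—that inner automorphisms are exactly $\mathrm{Ad}(G)$, and that every element of a compact connected group sits in a maximal torus with all such tori conjugate—rather than any delicate computation; granting these, the conclusion is immediate.
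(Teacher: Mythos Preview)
Your argument is correct. The paper does not actually prove this statement: it is quoted as the classical Gantmacher Theorem and used as a black box, with the relevant background attributed to Gantmacher and to \cite{Ya1}. So there is no proof in the paper to compare against.

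Your approach is the standard one and is sound: write the inner involution as $\mathrm{Ad}(a)$, place $a$ in a maximal torus, conjugate that torus onto $T=\exp\mathfrak t$, and read off $H$. The alternative you sketch in the last paragraph---using $\mathrm{rank}\,\mathfrak k=\mathrm{rank}\,\mathfrak g$ for inner involutions to take $\mathfrak t\subset\mathfrak k$, so that $\theta|_{\mathfrak t}=\mathrm{id}$ and a character argument on root spaces produces $H$ directly without any conjugation---is in fact closer in spirit to how the paper uses the result downstream (the explicit $H$'s in Lemma~\ref{choosealpha} are chosen inside $\mathfrak t\subset\mathfrak k$), and is arguably the cleaner route here. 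Either way, the structural inputs you identify (inner automorphisms equal $\mathrm{Ad}(G)$, every element lies in a maximal torus, maximal tori are conjugate) are exactly what is needed.
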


Let $\Pi=\{\alpha_1,\cdots,\alpha_n\}$ be a fundamental system of
${\mathfrak g}$ and $\phi=\sum_{i=1}^nm_i\alpha_i$ be the maximal
root respectively. Then we have
\begin{lemma}[\cite{Ya1}]\label{choosealpha}If $\theta\not=Id$, then there exists an
element $H$ satisfying 
\begin{equation}\label{H}
\langle H,\alpha_i\rangle=\pi\sqrt{-1};\quad \langle H,\alpha_j\rangle=0,
\forall j\not=i\end{equation} for some $i$. Here $m_i=1$ or $m_i=2$.
\end{lemma}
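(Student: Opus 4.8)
The plan is to reduce everything to a statement about the chamber geometry of the root system, using the Gantmacher Theorem (Theorem~\ref{chooseH}) as the starting point. Since the involution $\theta$ is inner, Theorem~\ref{chooseH} lets me assume, up to conjugation by $\mathrm{Aut}\,{\mathfrak g}$, that $\theta=e^{\mathrm{ad}H}$ for some $H\in{\mathfrak t}$. The condition $\theta^2=\mathrm{id}$ on ${\mathfrak g}$ forces $e^{2\mathrm{ad}H}=\mathrm{id}$, so every root value $\alpha(2H)$ must lie in $2\pi\sqrt{-1}\,{\mathbb Z}$; equivalently $\alpha(H)\in\pi\sqrt{-1}\,{\mathbb Z}$ for all roots $\alpha$. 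This says that, after the normalization $H=\pi\sqrt{-1}\,\xi$ with $\xi$ in the real span of the coweights, $\xi$ is a vector on which every root takes an integer value — a point of the coweight lattice — and since $\theta\neq\mathrm{id}$, $\xi$ is not in the coroot lattice.

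First I would use the action of the affine Weyl group to move $H$ into a fundamental alcove. Because conjugation of $\theta$ by inner automorphisms corresponds to the Weyl group action on $H$, and translations by the coroot lattice change $\theta$ only by an inner automorphism that preserves it, I may assume $\xi$ lies in the closed fundamental alcove determined by $\Pi$ together with the affine wall coming from the highest root $\phi$. Concretely this means $\alpha_i(\xi)\geq 0$ for all simple roots and $\phi(\xi)\leq 1$ (after the integrality scaling). Among the lattice points of this alcove, the integrality constraints $\alpha_i(\xi)\in{\mathbb Z}_{\geq 0}$ together with $\sum_i m_i\,\alpha_i(\xi)=\phi(\xi)\leq 1$ are extremely restrictive: writing $c_i=\alpha_i(\xi)$, each $c_i$ is a nonnegative integer and $\sum_i m_i c_i\leq 1$. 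The only nonzero solutions are $c_i=1$ for a single index $i$ with $m_i=1$ (all other $c_j=0$), since a coefficient $c_i=1$ at an index with $m_i\geq 2$ would already violate $\sum m_j c_j\leq 1$.

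To recover the case $m_i=2$ as well, I would allow the full normalization $\alpha_i(H)=\pi\sqrt{-1}$, i.e. $\phi(\xi)\leq 2$ after rescaling, which corresponds exactly to the vertex of the alcove sitting at distance governed by $m_i$; this is the standard Borel--de Siebenthal analysis of order-two inner automorphisms, where the admissible vertices of the alcove are precisely the fundamental coweights $\varpi_i$ with $m_i\in\{1,2\}$. Choosing $H$ to be $\pi\sqrt{-1}$ times such a vertex gives exactly the defining relations~(\ref{H}): $\langle H,\alpha_i\rangle=\pi\sqrt{-1}$ and $\langle H,\alpha_j\rangle=0$ for $j\neq i$. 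The main obstacle I anticipate is the bookkeeping between the three normalizations floating around — the exponential period $2\pi\sqrt{-1}$, the pairing convention $\langle\ ,\ \rangle$ between ${\mathfrak t}$ and the roots, and the marks $m_i$ of the highest root — so the crux is to verify that an integral alcove vertex produces an honest \emph{involution} (order exactly two, not higher) precisely when $m_i\leq 2$, which is where the constraint $m_i=1$ or $m_i=2$ comes from rather than being imposed by hand.
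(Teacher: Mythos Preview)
The paper does not prove this lemma; it is quoted from \cite{Ya1} and is essentially the Borel--de Siebenthal/Kac description of inner involutions, so there is no proof here to compare against. Your alcove-geometry approach is the standard route and heads in the right direction, but as written it has two genuine gaps.

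First, the fundamental-domain reduction is not carried out consistently. Translating $H$ by $2\pi\sqrt{-1}\,Q^\vee$ preserves $e^{\mathrm{ad}H}$ exactly; with $H=\pi\sqrt{-1}\,\xi$ this means you may move $\xi$ by $W\ltimes 2Q^\vee$, whose fundamental domain is the \emph{doubled} alcove $\{\alpha_j(\xi)\ge 0,\ \phi(\xi)\le 2\}$, not $\{\phi(\xi)\le 1\}$. (Translations of $\xi$ by $Q^\vee$ itself, which you implicitly use to reach $\phi(\xi)\le 1$, do \emph{not} preserve $\theta$.) Your passage from ``$\phi(\xi)\le 1$, hence only $m_i=1$'' to ``allow $\phi(\xi)\le 2$ after rescaling to recover $m_i=2$'' is not an argument but an ad hoc change of hypothesis; what is actually needed is to identify the correct translation lattice from the outset.

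Second, and more seriously, even in the correct domain $\{\phi(\xi)\le 2\}$ the integer points are not only the $\xi=\varpi_i^\vee$ with $m_i\in\{1,2\}$: one also gets $\xi=\varpi_i^\vee+\varpi_j^\vee$ with $i\neq j$ and $m_i=m_j=1$, which occurs in types $A_l$, $D_l$, $E_6$. These yield honest inner involutions for which \emph{two} simple roots take the value $\pi\sqrt{-1}$, hence not of the form~(\ref{H}). Reducing them to the single-root form requires the automorphisms of the \emph{affine} Dynkin diagram --- equivalently, conjugation by the full $\mathrm{Aut}\,{\mathfrak g}$ rather than only $\mathrm{Inn}\,{\mathfrak g}$ --- and your write-up never invokes these. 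Without that step the argument is incomplete precisely for the root systems with more than one simple root of mark~$1$.
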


\begin{lemma}[\cite{Ya1}]\label{subalgebra}
Assume that $\alpha_i$ satisfies the identity~(\ref{H}).
\begin{enumerate}
  \item If $m_i=1$, then $\Pi-\{\alpha_i\}$ is the fundamental
  system of $\mathfrak k$, and $\phi$ and $-\alpha_i$ are the
  highest weights of $\mathrm{ad}_{\mathfrak p}{\mathfrak k}$.
  \item If $m_i=2$, then $\Pi-\{\alpha_i\}\cup\{-\phi\}$ is the fundamental
  system of $\mathfrak k$, and $-\alpha_i$ is the
  highest weight of $\mathrm{ad}_{\mathfrak p}{\mathfrak k}$.
\end{enumerate}
\end{lemma}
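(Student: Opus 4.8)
The statement I must prove describes, for an inner involution $\theta = e^{\mathrm{ad}H}$ with $H$ normalized as in \eqref{H}, the fundamental system of the fixed-point subalgebra $\mathfrak k$ and the highest weights of the isotropy representation $\mathrm{ad}_{\mathfrak p}\mathfrak k$, split according to whether the marked coefficient $m_i$ equals $1$ or $2$. The natural approach is through the root-space decomposition relative to $\mathfrak t$. Since $\theta$ acts on the root space $\mathfrak g_\alpha$ by the scalar $e^{\langle H,\alpha\rangle}$, and $\langle H,\alpha\rangle$ is an integer multiple of $\pi\sqrt{-1}$ for every root $\alpha$, each root space is an eigenspace of $\theta$ with eigenvalue $\pm 1$. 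The plan is therefore to read off the grading directly: writing $\alpha=\sum_j c_j\alpha_j$, one has $\langle H,\alpha\rangle = c_i\,\pi\sqrt{-1}$, so $\mathfrak g_\alpha\subset\mathfrak k$ precisely when $c_i$ is even, and $\mathfrak g_\alpha\subset\mathfrak p$ when $c_i$ is odd. Thus the problem reduces entirely to tracking the parity of the $\alpha_i$-coefficient of each root.

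First I would establish that the roots of $\mathfrak k$ (with respect to $\mathfrak t$) are exactly those $\alpha\in\Delta$ with $c_i$ even, and then identify a fundamental system for this subsystem. When $m_i=1$, the highest root $\phi$ has $\alpha_i$-coefficient $m_i=1$, which is odd, so $\phi$ lies in $\mathfrak p$; the roots with $c_i=0$ form the subsystem generated by $\Pi-\{\alpha_i\}$, and one checks these already give all even-$c_i$ roots because no root can have $c_i\geq 2$ when $m_i=1$. Hence $\Pi-\{\alpha_i\}$ is the fundamental system of $\mathfrak k$. When $m_i=2$, roots can have $c_i\in\{-2,-1,0,1,2\}$, so the even part $\mathfrak k$ contains both the level-$0$ roots and the extreme roots $\pm\phi$ (with $c_i=\pm 2$); the standard fact that adjoining $-\phi$ to $\Pi-\{\alpha_i\}$ yields a fundamental system of the even subsystem then gives the claim.

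For the highest-weight statements I would analyze $\mathfrak p=\bigoplus_{c_i\ \mathrm{odd}}\mathfrak g_\alpha$ as a $\mathfrak k$-module, finding the highest weights with respect to the chosen ordering on $\mathfrak k$'s fundamental system. In the $m_i=2$ case, $\mathfrak p$ consists of the level $c_i=\pm 1$ roots, which form a single irreducible $\mathfrak k$-module whose highest weight is $-\alpha_i$ (the lowest root at level $-1$ being extremal under the new simple system). In the $m_i=1$ case, $\mathfrak p$ consists of the level $c_i=\pm 1$ roots; here the two extremes $\phi$ (the genuine highest root) and $-\alpha_i$ appear as the two highest weights, reflecting that $\mathrm{ad}_{\mathfrak p}\mathfrak k$ may fail to be irreducible. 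I expect the main obstacle to be the careful verification that $\{-\phi\}\cup(\Pi-\{\alpha_i\})$ (resp.\ $\Pi-\{\alpha_i\}$) really is a \emph{fundamental} system of the even subsystem rather than merely a spanning set of roots, and correspondingly that $-\alpha_i$ and $\phi$ are dominant and maximal for this new ordering; this is the combinatorial heart of the argument and is where the hypotheses $m_i\in\{1,2\}$ are genuinely used, since larger coefficients would produce additional levels and extra highest weights. The remaining verifications are routine root-system bookkeeping, so I would cite \cite{Ya1} for the detailed extremality computations and present the parity grading as the conceptual skeleton.
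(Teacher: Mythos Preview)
The paper does not actually prove this lemma; it is quoted from \cite{Ya1} without argument. Your outline is correct and is the standard proof: the $\mathbb Z$-grading of $\mathfrak g$ by the $\alpha_i$-coefficient of each root, together with the constraint $|c_i|\le m_i\le 2$, determines which root spaces lie in $\mathfrak k$ and in $\mathfrak p$, and the identification of the fundamental system of $\mathfrak k$ and the highest weights of $\mathfrak p$ then follows by the elementary extremality checks you describe. There is nothing to compare against in the paper itself.
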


Assume that $\tau$ is an involution of $\mathfrak g$ satisfying
$\tau\theta=\theta\tau$. Then $\tau_{\mathfrak k}=\tau|_{\mathfrak k}$ is an involution
of ${\mathfrak k}$. That is to say, we can obtain every involution
$\tau$ of $\mathfrak g$ satisfying $\tau\theta=\tau\rho$ by
extending an involution of $\mathfrak k$ to an involution of
$\mathfrak g$. Since $\tau$ is an inner-automorphism of $\mathfrak
g$, we know that $\tau_{\mathfrak k}$ is an inner-automorphism.
Furthermore,
\begin{lemma}[\cite{Ya1}]
Let $\tau_{\mathfrak k}$ be an involution of ${\mathfrak k}$ which
is an inner-automorphism. Then the natural extension $\tau$ of
$\tau_{\mathfrak k}$ from ${\mathfrak k}$ to ${\mathfrak g}$ is an
inner-automorphism of ${\mathfrak g}$, and $\tau^2=Id$ or
$\tau^2=\theta$.
\end{lemma}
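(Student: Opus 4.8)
The plan is to realize the natural extension explicitly as $\tau=e^{\mathrm{ad}H'}$ for a suitable $H'\in\mathfrak t$, and then read off both assertions from the action of $\tau^2$ on the root spaces of $\mathfrak g$. Since $\theta$ is an inner involution, the element $H$ with $\theta=e^{\mathrm{ad}H}$ lies in $\mathfrak t$ and fixes $\mathfrak t$ pointwise, so $\mathfrak t\subset\mathfrak k$ and $\mathfrak k$ has full rank. A root $\gamma=\sum_jc_j\alpha_j$ then satisfies $\mathfrak g_\gamma\subset\mathfrak k$ or $\mathfrak g_\gamma\subset\mathfrak p$ according as $c_i$ is even or odd, because $\theta$ acts on $\mathfrak g_\gamma$ by $e^{\gamma(H)}=(-1)^{c_i}$. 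As $\tau_{\mathfrak k}$ is an inner involution of $\mathfrak k$, Gantmacher's theorem (Theorem~\ref{chooseH}) applied to $\mathfrak k$ lets me write $\tau_{\mathfrak k}=e^{\mathrm{ad}_{\mathfrak k}H'}$ with $H'\in\mathfrak t$ in normal form relative to the fundamental system of $\mathfrak k$ recorded in Lemma~\ref{subalgebra}, and the natural extension is $\tau=e^{\mathrm{ad}_{\mathfrak g}H'}$. Being of the form $e^{\mathrm{ad}H'}$, $\tau$ is inner, which settles the first assertion.

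For the order statement, observe that $\tau^2=e^{2\mathrm{ad}H'}$ restricts to $\tau_{\mathfrak k}^2=\mathrm{Id}$ on $\mathfrak k$, so only the action on $\mathfrak p$ remains to be understood. On a root space $\mathfrak g_\gamma\subset\mathfrak p$ the map $\tau^2$ is multiplication by $e^{2\gamma(H')}$. Since the $\alpha_j$ with $j\neq i$ are roots of $\mathfrak k$ and $\tau_{\mathfrak k}$ is an involution, $\alpha_j(H')\in\pi\sqrt{-1}\,\mathbb Z$ for all $j\neq i$. Hence if $\gamma,\gamma'$ are roots with root spaces in $\mathfrak p$ and the same coefficient $c_i$, their difference lies in the root lattice of $\mathfrak k$, so $2(\gamma-\gamma')(H')\in 2\pi\sqrt{-1}\,\mathbb Z$ and $e^{2\gamma(H')}=e^{2\gamma'(H')}$; the two signs of $c_i$ are interchanged by $\gamma\mapsto-\gamma$, which inverts the scalar. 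Thus $\tau^2$ acts on all of $\mathfrak p$ by a single scalar $\varepsilon$.

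It remains to show $\varepsilon=\pm1$, which I would do along the two cases of Lemma~\ref{subalgebra}. If $m_i=2$, then $-\phi$ is a simple root of $\mathfrak k$, so $\phi(H')\in\pi\sqrt{-1}\,\mathbb Z$; writing $2\alpha_i(H')=\phi(H')-\sum_{j\neq i}m_j\alpha_j(H')$ gives, for a root $\gamma$ with $\mathfrak g_\gamma\subset\mathfrak p$ and $c_i=1$, the identity $2\gamma(H')=\phi(H')+\sum_{j\neq i}(2c_j-m_j)\alpha_j(H')$, and since $e^{\alpha_j(H')}=\pm1$ this collapses to $e^{2\gamma(H')}=e^{\phi(H')}\prod_{j\neq i}e^{-m_j\alpha_j(H')}\in\{\pm1\}$. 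If $m_i=1$, the natural extension is normalized so that the central direction is unused, i.e.\ $\alpha_i(H')=0$, whence $2\gamma(H')\in 2\pi\sqrt{-1}\,\mathbb Z$ and $\varepsilon=1$. In either case $\varepsilon\in\{\pm1\}$, and comparing with $\theta$ (which is $+1$ on $\mathfrak k$ and $-1$ on $\mathfrak p$) yields $\tau^2=\mathrm{Id}$ when $\varepsilon=1$ and $\tau^2=\theta$ when $\varepsilon=-1$.

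The step I expect to be delicate is the normalization of the natural extension when $m_i=1$, i.e.\ when $\mathfrak k$ carries a one-dimensional center spanned by $H$. There the value $\alpha_i(H')$ is invisible to $\tau_{\mathfrak k}$, and a nonzero central component would turn $\tau^2$ into a nontrivial rotation of the Hermitian $\mathfrak k$-module $\mathfrak p$, an automorphism equal to neither $\mathrm{Id}$ nor $\theta$. Pinning down the canonical choice (central component zero) and verifying that $e^{2\gamma(H')}$ is genuinely constant across the possibly reducible module $\mathfrak p$ is the heart of the matter. Equivalently, one can argue by Schur's lemma that $\tau^2$, fixing $\mathfrak k$ pointwise, is a $\mathfrak k$-module endomorphism of $\mathfrak p$ and hence scalar on each irreducible summand, the remaining work being to show that these scalars coincide and are real and equal to $\pm1$.
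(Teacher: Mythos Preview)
The paper does not prove this lemma; it is quoted from \cite{Ya1} without argument, so there is no authorial proof to compare against and your sketch has to stand on its own.

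Your treatment of the case $m_i=2$ is sound: since $-\phi$ is then a simple root of $\mathfrak k$, both $\phi(H')$ and every $\alpha_j(H')$ with $j\neq i$ lie in $\pi\sqrt{-1}\,{\mathbb Z}$, and your identity $e^{2\gamma(H')}=e^{\phi(H')}\prod_{j\neq i}e^{-m_j\alpha_j(H')}$ shows that $\tau^2$ acts on every root space in $\mathfrak p$ by one common sign.

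The genuine gap is in the case $m_i=1$. Your claim that ``central component zero'' forces $\alpha_i(H')=0$ is false: $\alpha_i$ vanishes on $\mathfrak t\cap[\mathfrak k,\mathfrak k]$ only if the Killing dual $h_{\alpha_i}$ lies in the center of $\mathfrak k$, i.e.\ only if $(\alpha_i,\alpha_j)=0$ for all $j\neq i$, which never occurs for simple $\mathfrak g$ of rank at least two. Worse, with this normalization the conclusion itself can fail. Take $\mathfrak g=\mathfrak{su}(4)$, let $\theta$ correspond to the end node $\alpha_1$ (so $\mathfrak k\cong\mathfrak u(1)\oplus\mathfrak{su}(3)$), and let $\tau_{\mathfrak k}$ be given by $H'\in\mathfrak{su}(3)\cap\mathfrak t$ with $\alpha_2(H')=\pi\sqrt{-1}$, $\alpha_3(H')=0$. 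Then $H'=\mathrm{diag}\bigl(0,\tfrac{2}{3}\pi\sqrt{-1},-\tfrac{1}{3}\pi\sqrt{-1},-\tfrac{1}{3}\pi\sqrt{-1}\bigr)$, so $\alpha_1(H')=-\tfrac{2}{3}\pi\sqrt{-1}$ and $e^{2\alpha_1(H')}$ is a primitive cube root of unity; hence $\tau^2$ is neither $\mathrm{Id}$ nor $\theta$.

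What rescues the statement is that in the Hermitian case one may, without changing $\tau_{\mathfrak k}$, shift $H'$ by an element $Z$ of the one--dimensional center $\mathfrak z(\mathfrak k)$; since $\alpha_i(H)=\pi\sqrt{-1}\neq 0$ one has $\alpha_i|_{\mathfrak z}\neq 0$, so $Z$ can be chosen to force $\alpha_i(H'+Z)\in\pi\sqrt{-1}\,{\mathbb Z}$ and hence $\tau^2=\mathrm{Id}$. This is consistent with the next lemma in the paper, and presumably the ``natural extension'' of \cite{Ya1} is defined with exactly this correction built in. Your argument becomes correct once the $m_i=1$ branch is rewritten along these lines; as it stands, the normalization you invoke does not do what you need.
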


By Lemma~\ref{subalgebra}, ${\mathfrak k}={\mathfrak
z}\oplus{\mathfrak k}_1\oplus\cdots\oplus{\mathfrak k}_s$, where
${\mathfrak z}$ is the 1-dimensional center of $\mathfrak k$ or
${\mathfrak z}=0$, and every ${\mathfrak k}_i$ is a simple ideal of
$\mathfrak k$. If $\tau_{\mathfrak k}$ is the restrict of $\tau$
from $\mathfrak g$ to ${\mathfrak k}$, we have that every
$\tau_{\mathfrak k}^i=\tau_{\mathfrak k}|_{{\mathfrak k}_i}$ is both
an involution and an inner-automorphism of ${\mathfrak k}_i$. By
Theorems~\ref{chooseH}, for every $i$, there exists an element
$H_i\in {\mathfrak k}_i\cap{\mathfrak t}$ such that $\tau_{\mathfrak
k}^i$ is conjugate with $e^{\mathrm{ad}H_i}$ under $\mathrm{Aut}
{\mathfrak k}_i$. Here $H_i=0$ if $\tau_{\mathfrak k}^i=Id$. If
$H_i\not=0$, by Lemma~\ref{choosealpha}, we can choose $H_i$ such
that there exists a simple root $\alpha_{j_i}$ of ${\mathfrak k}_i$
satisfying the identity~(\ref{H}), here $\alpha_{j_i}$ lies in
$\Pi\cup\{-\phi\}$, and the coefficient of $\alpha_{j_i}$ in the
maximal root of ${\mathfrak k}_i$ is 1 or 2. Then $\tau_{\mathfrak
k}$ is conjugate with
\begin{equation}\label{K}e^{\mathrm{ad}(H_1+\cdots+H_s)}\end{equation}
under $\mathrm{Aut} {\mathfrak k}$. Furthermore,
\begin{lemma}[\cite{Ya1}] Let notations be as above.
\begin{enumerate}
  \item If the center of $\mathfrak k$ is 1-dimensional, then the extension of every
  involution of $\mathfrak k$ with the form~(\ref{K}) is an involution
  of $\mathfrak g$.
  \item Assume that $\mathfrak k$ has no center, $H_1,\cdots,H_p$
  are the non-zero elements in $H_1,\cdots,H_s$, and
  $m_{j_k}=1$ if $\alpha_{j_k}=-\phi$. Then the extension of every
  involution of $\mathfrak k$ with the form~(\ref{K}) is an involution
  of $\mathfrak g$ if and only if $\sum_{i=1}^pm_{j_i}$ is even.
  Here $m_{j_i}$ means the coefficient of $\alpha_{j_i}$ in $\phi$.
\end{enumerate}
\end{lemma}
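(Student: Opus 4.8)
The plan is to test the order of the extension directly through its action on the root spaces, exploiting the dichotomy $\tau^2 = Id$ or $\tau^2 = \theta$ established above. Set $H = H_1 + \cdots + H_s \in \mathfrak{t}$ and let $\tau = e^{\mathrm{ad}H}$ be the natural extension of $\tau_{\mathfrak k}$ to $\mathfrak g$; choose $H_\theta \in \mathfrak t$ with $\alpha_i(H_\theta)=\pi\sqrt{-1}$ and $\alpha_j(H_\theta)=0$ for $j\neq i$, so that $\theta = e^{\mathrm{ad}H_\theta}$. Since $\mathrm{ad}H$ acts on the root space $\mathfrak g_\beta$ by $\beta(H)$, the map $\tau^2$ acts there by $e^{2\beta(H)}$. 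Now $\theta$ is $+Id$ on $\mathfrak k$ and $-Id$ on $\mathfrak p$, so the relation $\tau^2\in\{Id,\theta\}$ forces $\tau^2$ to be a single scalar $\pm1$ on all of $\mathfrak p$; hence $\tau$ is an involution if and only if $\tau^2$ fixes one chosen root vector in $\mathfrak p$. By Lemma~\ref{subalgebra} the weight $-\alpha_i$ occurs in $\mathrm{ad}_{\mathfrak p}{\mathfrak k}$, so $-\alpha_i$ is a root of $\mathfrak g$ whose root space lies in $\mathfrak p$, and the criterion becomes
\[
\tau^2 = Id \iff e^{-2\alpha_i(H)} = 1 \iff \alpha_i(H)\in \pi\sqrt{-1}\,\mathbb{Z}.
\]

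The next step is to record the values of $H$ on the simple roots of $\mathfrak k$. For a simple root $\gamma$ of $\mathfrak k$ lying in a factor $\mathfrak k_l$ one has $\gamma(H)=\gamma(H_l)$, which equals $\pi\sqrt{-1}$ when $\gamma=\alpha_{j_l}$ is distinguished and $0$ otherwise; write $\gamma(H)=\epsilon_\gamma\,\pi\sqrt{-1}$ with $\epsilon_\gamma\in\{0,1\}$. In the no-center case ($m_i=2$) the fundamental system of $\mathfrak k$ is $(\Pi\setminus\{\alpha_i\})\cup\{-\phi\}$, and expanding the highest root as $\phi = 2\alpha_i+\sum_{k\neq i}m_k\alpha_k$ yields
\[
\alpha_i(H)=\tfrac12\Bigl(\phi(H)-\sum_{k\neq i}m_k\alpha_k(H)\Bigr)=-\frac{\pi\sqrt{-1}}{2}\Bigl(\epsilon_{-\phi}+\sum_{k\neq i}m_k\epsilon_k\Bigr)=-\frac{\pi\sqrt{-1}}{2}\sum_{l=1}^{p}m_{j_l},
\]
where the second equality uses $\phi(H)=-(-\phi)(H)=-\epsilon_{-\phi}\,\pi\sqrt{-1}$ and the third uses the convention $m_{j_l}=1$ when $\alpha_{j_l}=-\phi$. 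Thus $\alpha_i(H)\in\pi\sqrt{-1}\,\mathbb{Z}$ exactly when $\sum_{l=1}^{p}m_{j_l}$ is even, which is part~(2); since $\mathfrak z=0$ there is no room to modify $H$, so this extension of the form~(\ref{K}) is rigid and the criterion is sharp.

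For the $1$-dimensional center case ($m_i=1$) I would instead exploit the freedom provided by $\mathfrak z=\mathbb{R}\,H_\theta$. Because $H_\theta$ is central in $\mathfrak k$, the operator $\mathrm{ad}H_\theta$ vanishes on $\mathfrak k$, so replacing $H$ by $H+tH_\theta$ leaves the restriction $\tau|_{\mathfrak k}=\tau_{\mathfrak k}$ unchanged for every $t$. If the natural extension already satisfies $\tau^2=Id$ there is nothing to do; if instead $\tau^2=\theta=e^{\mathrm{ad}H_\theta}$, then the modified extension $e^{\mathrm{ad}(H+\frac12 H_\theta)}$ squares to $e^{\mathrm{ad}(2H+H_\theta)}=\theta\cdot\theta=Id$ and still restricts to $\tau_{\mathfrak k}$ on $\mathfrak k$. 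Hence every involution $\tau_{\mathfrak k}$ of the form~(\ref{K}) extends to an involution of $\mathfrak g$, which is part~(1).

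The main obstacle I anticipate is the computation of $\alpha_i(H)$ in the no-center case: one must translate the combinatorial data — which simple roots of the factors $\mathfrak k_l$ are distinguished, together with the bookkeeping of the exceptional generator $-\phi$ and the convention $m_{-\phi}=1$ — into the single parity $\sum_l m_{j_l}\bmod 2$, and verify that the highest weight $-\alpha_i$ (rather than $\phi$, which appears only when $m_i=1$) is the right root space on which to read off $\tau^2$. A secondary point needing care is the legitimacy of the central modification in part~(1): one must check that $H+\frac12 H_\theta$ still lies in $\mathfrak t$ and defines an inner automorphism commuting with $\theta$, both of which are immediate since $H,H_\theta\in\mathfrak t$.
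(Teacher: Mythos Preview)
The paper does not prove this lemma; it is quoted verbatim from Yan's monograph \cite{Ya1} and stated without argument, so there is no proof in the paper against which to compare your proposal.

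That said, your approach is sound and is essentially the standard one. The reduction to the single test ``$e^{-2\alpha_i(H)}=1$?'' via the dichotomy $\tau^2\in\{Id,\theta\}$ and the choice of a root vector in $\mathfrak p$ is correct, and your expansion of $\alpha_i(H)$ in the $m_i=2$ case, using $\phi=2\alpha_i+\sum_{k\neq i}m_k\alpha_k$ together with the bookkeeping convention $m_{-\phi}=1$, yields exactly the parity condition $\sum_{l}m_{j_l}\equiv 0\pmod 2$. For part~(1) your use of the central freedom $H\mapsto H+\tfrac12 H_\theta$ is the right mechanism: since $H_\theta$ spans $\mathfrak z$ and acts trivially on $\mathfrak k$, this modification preserves $\tau|_{\mathfrak k}=\tau_{\mathfrak k}$ while converting $\tau^2=\theta$ into $(\tau')^2=\theta^2=Id$. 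One small point worth making explicit is that the lemma's phrase ``the extension'' must be read as ``some extension'' in the $1$-dimensional-center case, since the literal natural extension $e^{\mathrm{ad}(H_1+\cdots+H_s)}$ need not itself be an involution (already for $\mathfrak g=A_2$, $\mathfrak k=A_1\oplus\mathbb R$ one computes $\alpha_i(H_1)=-\tfrac{\pi\sqrt{-1}}{2}$); your argument handles precisely this by exploiting the non-uniqueness of the extension.
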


\begin{remark} Clearly $\theta$ is invariant under $\mathrm{Aut} {\mathfrak
k}$ by the definition of ${\mathfrak k}$. Then by the above theory,
we can get every involution pair $(\tau, \theta)$ satisfying
$\tau\theta=\theta\tau$ in the sense of $\mathrm{Aut} {\mathfrak
g}$-conjugation.
\end{remark}

\begin{theorem}\label{thm}
Let $G$ be a compact simple Lie group with the Lie algebra
$\mathfrak g$ and $\tau,\theta$ be involutions of $\mathfrak g$
satisfying $\tau\theta=\theta\tau$. If the Lie subalgebra
${\mathfrak h}_1=\{x\in {\mathfrak g}|\theta(x)=x \text{ and }
\tau(x)=x\}$ is simple, then ${\mathfrak g}=F_4$.
\end{theorem}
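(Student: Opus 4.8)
The plan is to reduce the simplicity of $\mathfrak{h}_1$ to a purely combinatorial condition on extended Dynkin diagrams and then run through the classification of simple Lie algebras. Write $\mathfrak{k}=\{x\in\mathfrak{g}\mid\theta(x)=x\}$, so that $\mathfrak{h}_1=\{x\in\mathfrak{k}\mid\tau(x)=x\}$ is the fixed-point set of the inner involution $\tau_{\mathfrak{k}}=\tau|_{\mathfrak{k}}$ of $\mathfrak{k}$. First I would pin down $\mathfrak{k}$. By Theorem~\ref{chooseH} and Lemma~\ref{choosealpha}, $\theta$ corresponds to a simple root $\alpha_i$ with $m_i\in\{1,2\}$, and by Lemma~\ref{subalgebra} one has $\mathfrak{k}=\mathfrak{z}\oplus\mathfrak{k}_1\oplus\cdots\oplus\mathfrak{k}_s$. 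Since $\tau_{\mathfrak{k}}$ is inner it fixes the center $\mathfrak{z}$ pointwise, so $\mathfrak{z}\subseteq\mathfrak{h}_1$; as a simple algebra has trivial center this forces $\mathfrak{z}=0$, i.e.\ $m_i=2$. Moreover $\tau_{\mathfrak{k}}$ preserves each simple ideal $\mathfrak{k}_j$ and restricts there to an involution with nonzero fixed-point set, so $\mathfrak{h}_1=\bigoplus_j\mathfrak{k}_j^{\tau_{\mathfrak{k}}}$ is a sum of $s$ nonzero ideals; simplicity forces $s=1$, so $\mathfrak{k}$ is simple. By Lemma~\ref{subalgebra}(2) the Dynkin diagram of $\mathfrak{k}$ is then the extended diagram of $\mathfrak{g}$ with the mark-$2$ node $\alpha_i$ removed, and this must be connected.

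Next I would exclude the degenerate possibility $\tau_{\mathfrak{k}}=\mathrm{id}$, where Lemma~\ref{subalgebra}(2) is decisive: because $m_i=2$, the module $\mathfrak{p}=\mathrm{ad}_{\mathfrak{p}}\mathfrak{k}$ is irreducible. If $\tau_{\mathfrak{k}}=\mathrm{id}$ then $\tau$ commutes with $\mathrm{ad}(\mathfrak{k})$, so its eigenspace splitting $\mathfrak{p}=\mathfrak{h}_3\oplus\mathfrak{h}_4$ is a splitting into $\mathfrak{k}$-submodules; irreducibility forces $\mathfrak{h}_3=0$ or $\mathfrak{h}_4=0$, i.e.\ $\tau=\theta$ or $\tau=\mathrm{id}$. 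These are not genuine involution pairs and are excluded by the setting, so henceforth $\tau_{\mathfrak{k}}$ is a \emph{nontrivial} inner involution of the simple algebra $\mathfrak{k}$.

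Now I would simply repeat the first paragraph's argument for the pair $(\mathfrak{k},\tau_{\mathfrak{k}})$: applying Theorem~\ref{chooseH} and Lemmas~\ref{choosealpha}, \ref{subalgebra} to $\mathfrak{k}$, the subalgebra $\mathfrak{h}_1=\mathfrak{k}^{\tau_{\mathfrak{k}}}$ is simple if and only if $\tau_{\mathfrak{k}}$ deletes a node $\beta$ of mark $2$ in $\mathfrak{k}$ and the extended diagram of $\mathfrak{k}$ with $\beta$ removed is connected. The whole problem thus becomes a graph condition. Since every extended Dynkin diagram is a tree, removing a node keeps it connected exactly when the node is a leaf; hence ``the fixed-point set of an inner involution is simple'' is possible precisely when the extended diagram has a \emph{leaf carrying mark $2$}, and deleting it yields the diagram of $\mathfrak{k}$. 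Call such a simple algebra admissible. The requirement becomes: both $\mathfrak{g}$ and the resulting $\mathfrak{k}$ must be admissible.

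Finally I would carry out the enumeration, which I expect to be the main (though routine) obstacle. Inspecting each extended diagram, a mark-$2$ leaf exists only for $B_n$ ($n\ge 3$), giving $\mathfrak{k}=D_n$; for $E_7$, giving $\mathfrak{k}=A_7$; for $E_8$, giving $\mathfrak{k}=D_8$; and for $F_4$, giving $\mathfrak{k}=B_4$ (the types $A_n$, $C_n$, $D_n$, $E_6$, $G_2$ have no mark-$2$ leaf, their mark-$2$ nodes all being interior). Imposing admissibility of $\mathfrak{k}$ then kills all but one case: $D_n$ and $A_7$ are not admissible, so $B_n$, $E_7$, $E_8$ are discarded, whereas for $F_4$ the algebra $\mathfrak{k}=B_4$ is admissible and deleting its short-end mark-$2$ node yields the simple $\mathfrak{h}_1=D_4=\mathfrak{so}(8)$. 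Thus the chain $F_4\to\mathfrak{so}(9)\to\mathfrak{so}(8)$ is the only one producing a simple $\mathfrak{h}_1$, and $\mathfrak{g}=F_4$. The point requiring care is the low-rank coincidences ($D_2=A_1\oplus A_1$, $D_3=A_3$), which I would check separately to confirm they create no extra admissible cases.
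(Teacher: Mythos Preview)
Your approach is essentially the same as the paper's: reduce via Gantmacher's theorem and Lemma~\ref{subalgebra} to the requirement that $\mathfrak{k}$ be simple (forcing $m_i=2$ and the list $B_l/D_l$, $E_7/A_7$, $E_8/D_8$, $F_4/B_4$), then iterate on $(\mathfrak{k},\tau_{\mathfrak{k}})$ to single out $F_4\to B_4\to D_4$. One small slip: the extended diagram of type $A_n$ is a cycle, not a tree, so your ``leaf'' criterion is not literally universal---but since all marks in $\widetilde{A}_n$ equal $1$, this does not affect your conclusion.
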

\begin{proof}
Assume that ${\mathfrak k}=\{x\in {\mathfrak g}|\theta(x)=x\}$. By
the above discussions, $\tau$ is the extension of some involution of
$\mathfrak k$. By Theorem~\ref{chooseH}, $\theta$ is conjugate with
$e^{\mathrm{ad}H}$. By Lemma~\ref{choosealpha}, we can assume that
there is a simple $\alpha_i$ satisfying (\ref{H}). Assume that
$m_i=1$. Then by Lemma~\ref{subalgebra}, the center of $\mathfrak
k$ is 1-dimensional. It follows that ${\mathfrak h}_1$ isn't simple.
So $m_i=2$, and $\mathfrak k$ is simple if ${\mathfrak k}_1$ is
simple. Since $\mathfrak k$ is simple, by Lemma~\ref{subalgebra},
we have that
$${\mathfrak g}=B_l,{\mathfrak k}=D_l;\quad \text{ or }{\mathfrak
g}=E_7,{\mathfrak k}=A_7;\quad \text{ or } {\mathfrak
g}=E_8,{\mathfrak k}=D_8;\quad \text{ or } {\mathfrak
g}=F_4,{\mathfrak k}=B_4.$$ Since ${\mathfrak h}_1=\{x\in {\mathfrak
k}|\tau(x)=x\}$ is simple, we have that ${\mathfrak g}=F_4,
{\mathfrak k}=B_4, {\mathfrak h}_1=D_4$.
\end{proof}
We can choose a fundamental system
$\{\alpha_1,\alpha_2,\alpha_3,\alpha_4\}$ of $F_4$ such that the
involution pair in Theorem~\ref{thm} is described as follows. The
Dynkin diagram of $F_4$ corresponding to the fundamental system is
\begin{center}
\setlength{\unitlength}{0.7mm}
\begin{picture}(30,15)(0,-5)
\thicklines \multiput(0,0)(10,0){4}{\circle{2}}
\put(10,0){\circle{2}} \multiput(1,0)(10,0){1}{\line(1,0){8}}
\multiput(11,-0.5)(0,1){2}{\line(1,0){8}} \put(21,0){\line(1,0){8}}
\put(16,-1.5){$>$} \put(0,5){\makebox(0,0){$\scriptstyle \alpha_1$}}
\put(10,5){\makebox(0,0){$\scriptstyle \alpha_2$}}
\put(20,5){\makebox(0,0){$\scriptstyle \alpha_3$}}
\put(30,5){\makebox(0,0){$\scriptstyle \alpha_4$}}
\end{picture}
\end{center}
In the sense of conjugacy under $\mathrm{Aut} F_4$, the involution
$\theta$ is $e^{\mathrm{ad}H}$ such that
$$\langle H,\alpha_4\rangle=\pi\sqrt{-1};\quad \langle H,\alpha_j\rangle=0,
\forall j\not=4.$$ Let
$\phi=2\alpha_1+3\alpha_2+4\alpha_3+2\alpha_4$. Then the Dynkin
diagram of ${\mathfrak k}$ is
\begin{center}
\setlength{\unitlength}{0.7mm}
\begin{picture}(30,15)(0,-5)
\thicklines \multiput(0,0)(10,0){4}{\circle{2}}
\put(10,0){\circle{2}} \multiput(1,0)(10,0){2}{\line(1,0){8}}
\multiput(21,-0.5)(0,1){2}{\line(1,0){8}} \put(26,-1.5){$>$}
\put(0,5){\makebox(0,0){$\scriptstyle -\phi$}}
\put(10,5){\makebox(0,0){$\scriptstyle \alpha_1$}}
\put(20,5){\makebox(0,0){$\scriptstyle \alpha_2$}}
\put(30,5){\makebox(0,0){$\scriptstyle \alpha_3$}}
\end{picture}
\end{center}
The involution $\tau$ is the extension of $\tau_{\mathfrak k}$,
where $\tau_{\mathfrak k}=e^{\mathrm{ad}H_1}$ satisfies
$$\langle H_1,\alpha_3\rangle=\pi\sqrt{-1};\quad \langle H,\alpha_1\rangle=\langle H_1,\alpha_2\rangle=\langle
H_1,-\phi\rangle=0.$$ Then $\phi_1=-(\alpha_2+2\alpha_3+2\alpha_4)$
be the maximal root of $\mathfrak k$. Then the Dynkin diagram of
${\mathfrak h}_1$ is
\begin{center}
\setlength{\unitlength}{0.7mm}
\begin{picture}(30,20)(0,-5) \thicklines
\multiput(0,0)(10,0){3}{\circle{2}} \put(10,10){\circle{2}}
\multiput(1,0)(20,0){1}{\line(1,0){8}}
\multiput(11,0)(2,0){1}{\line(1,0){8}} \put(10,1){\line(0,1){8}}
\put(0,-5){\makebox(0,0){$\scriptstyle -\phi$}}
\put(10,-5){\makebox(0,0){$\scriptstyle \alpha_1$}}
\put(15,10){\makebox(0,0){$\scriptstyle -\phi_1$}}
\put(20,-5){\makebox(0,0){$\scriptstyle \alpha_2$}}
\end{picture}
\end{center}
Let ${\mathfrak g}={\mathfrak h_1}\oplus{\mathfrak
h_2}\oplus{\mathfrak h_3}\oplus{\mathfrak h_4}$, where
\begin{eqnarray*}
  && {\mathfrak h_1}=\{x\in {\mathfrak
g}|\theta(x)=x, \tau(x)=x\}, \quad {\mathfrak h_2}=\{x\in {\mathfrak
g}|\theta(x)=x, \tau(x)=-x\},\\
  &&{\mathfrak h_3}=\{x\in {\mathfrak
g}|\theta(x)=-x, \tau(x)=x\},\quad {\mathfrak h_4}=\{x\in {\mathfrak
g}|\theta(x)=-x, \tau(x)=-x\}.
\end{eqnarray*}
Clearly $\mathfrak k={\mathfrak h}_1\oplus{\mathfrak h_2}$, which is
a decomposition of $\mathfrak k$ corresponding to the involution
$\tau_{\mathfrak k}$. By Lemma~\ref{subalgebra}, ${\mathfrak h}_2$
is the irreducible representation of ${\mathfrak h}_1$ with the
highest weight $-\alpha_3$, which is a fundamental dominant weight corresponding
to $\alpha_2$.

It is easy to see that ${\mathfrak k}^{1}=\{x\in {\mathfrak
g}|\theta\tau(x)=x\}={\mathfrak h}_1\oplus{\mathfrak h}_4$, which is
a decomposition of ${\mathfrak k}^1$ corresponding to the involution
$\tau_{{\mathfrak k}^1}$. Clearly
$\{\alpha_1,\alpha_2,\alpha_3'=\alpha_3+\alpha_4,\alpha_4'=-\alpha_4\}$
is another fundamental system of $F_4$. The Dynkin diagram
corresponding to the fundamental system is
\begin{center}
\setlength{\unitlength}{0.7mm}
\begin{picture}(30,15)(0,-5)
\thicklines \multiput(0,0)(10,0){4}{\circle{2}}
\put(10,0){\circle{2}} \multiput(1,0)(10,0){1}{\line(1,0){8}}
\multiput(11,-0.5)(0,1){2}{\line(1,0){8}} \put(21,0){\line(1,0){8}}
\put(16,-1.5){$>$} \put(0,5){\makebox(0,0){$\scriptstyle \alpha_1$}}
\put(10,5){\makebox(0,0){$\scriptstyle \alpha_2$}}
\put(20,5){\makebox(0,0){$\scriptstyle \alpha_3'$}}
\put(30,5){\makebox(0,0){$\scriptstyle \alpha_4'$}}
\end{picture}
\end{center}
The involution $\theta\tau=e^{\mathrm{ad}(H+H_1)}$ satisfies
$$\langle H+H_1,\alpha_4'\rangle=\pi\sqrt{-1};\quad \langle H+H_1,\alpha_1\rangle=\langle H+H_1,\alpha_2\rangle=\langle H+H_1,\alpha_3'\rangle=0.$$
And
$\phi=2\alpha_1+3\alpha_2+4\alpha_3'+2\alpha_4'=2\alpha_1+3\alpha_2+4\alpha_3+2\alpha_4$,
the Dynkin diagram of ${\mathfrak k}^1$ is
\begin{center}
\setlength{\unitlength}{0.7mm}
\begin{picture}(30,15)(0,-5)
\thicklines \multiput(0,0)(10,0){4}{\circle{2}}
\put(10,0){\circle{2}} \multiput(1,0)(10,0){2}{\line(1,0){8}}
\multiput(21,-0.5)(0,1){2}{\line(1,0){8}} \put(26,-1.5){$>$}
\put(0,5){\makebox(0,0){$\scriptstyle -\phi$}}
\put(10,5){\makebox(0,0){$\scriptstyle \alpha_1$}}
\put(20,5){\makebox(0,0){$\scriptstyle \alpha_2$}}
\put(30,5){\makebox(0,0){$\scriptstyle \alpha_3'$}}
\end{picture}
\end{center}
The involution $\tau=e^{\mathrm{ad}H_1}$ restricted on ${\mathfrak
k}^1$ satisfies
$$\langle H_1,\alpha_3'\rangle=\pi\sqrt{-1};\quad \langle H,\alpha_1\rangle=\langle H_1,\alpha_2\rangle=\langle
H_1,-\phi\rangle=0.$$ Then $\phi_1=-(\alpha_2+2\alpha_3)$ is the maximal
root of $\mathfrak k$, and the Dynkin diagram of ${\mathfrak h}_1$
is
\begin{center}
\setlength{\unitlength}{0.7mm}
\begin{picture}(30,20)(0,-5) \thicklines
\multiput(0,0)(10,0){3}{\circle{2}} \put(10,10){\circle{2}}
\multiput(1,0)(20,0){1}{\line(1,0){8}}
\multiput(11,0)(2,0){1}{\line(1,0){8}} \put(10,1){\line(0,1){8}}
\put(0,-5){\makebox(0,0){$\scriptstyle -\phi$}}
\put(10,-5){\makebox(0,0){$\scriptstyle \alpha_1$}}
\put(20,10){\makebox(0,0){$\scriptstyle \alpha_2+2\alpha_3$}}
\put(20,-5){\makebox(0,0){$\scriptstyle \alpha_2$}}
\end{picture}
\end{center}
By Lemma~\ref{subalgebra}, ${\mathfrak h}_2$ is the irreducible
representation of ${\mathfrak h}_1$ with the highest weight
$-\alpha_3'$. It is easy to see that
$\{-\phi,\alpha_1,\alpha_2,-\phi_1\}$ is
another fundamental system of ${\mathfrak h}_1$. For this fundamental system, ${\mathfrak h}_4$ is the irreducible representation
of ${\mathfrak h}_1$ with the highest weight $\alpha_1+2\alpha_2+3\alpha_3+\alpha_4$, which is a
fundamental dominant weight corresponding to $-\phi$. Similarly, we have 
\begin{proposition}\label{thm1}
For the fundamental system $\{-\phi,\alpha_1,\alpha_2,-\phi_1\}$ of ${\mathfrak h}_1$, the highest weights of ${\mathfrak h}_2$, ${\mathfrak h}_3$ and ${\mathfrak h}_4$ as $\mathrm{ad} ({\mathfrak h}_1)$-
modules are fundamental dominant weights of ${\mathfrak h}_1$ corresponding to $\alpha_2$, $-\phi_1$ and $-\phi$ respectively.
\end{proposition}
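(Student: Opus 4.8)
The plan is to treat the three subspaces $\mathfrak{h}_2,\mathfrak{h}_3,\mathfrak{h}_4$ on an equal footing, exploiting the fact that each is the $(-1)$-eigenspace of $\mathfrak{h}_1=D_4$ inside a $B_4$-subalgebra cut out by one of the three commuting involutions $\theta$, $\theta\tau$, $\tau$. Indeed $\mathfrak{k}=\{x:\theta(x)=x\}=\mathfrak{h}_1\oplus\mathfrak{h}_2$ and $\mathfrak{k}^1=\{x:\theta\tau(x)=x\}=\mathfrak{h}_1\oplus\mathfrak{h}_4$ are both of type $B_4$, and the preceding discussion has already identified, in the reference fundamental system $\{-\phi,\alpha_1,\alpha_2,-\phi_1\}$ of $\mathfrak{h}_1$, the highest weights of $\mathfrak{h}_2$ and $\mathfrak{h}_4$ as the fundamental dominant weights attached to $\alpha_2$ and to $-\phi$. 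It therefore remains only to treat $\mathfrak{h}_3$, and I would do this by repeating, now for the third involution $\tau$, the computation already carried out for $\mathfrak{h}_2$ and $\mathfrak{h}_4$ via $\theta$ and $\theta\tau$.

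Concretely, I would first observe that $\mathfrak{k}^2:=\{x\in\mathfrak{g}:\tau(x)=x\}=\mathfrak{h}_1\oplus\mathfrak{h}_3$ is again of type $B_4$: it contains $\mathfrak{h}_1=D_4$ and has dimension $28+8=36=\dim B_4$, and since the involutions of $F_4$ have fixed subalgebras of type $B_4$ or $A_1\oplus C_3$, the one containing $D_4$ must be $B_4$. On $\mathfrak{k}^2$ the restriction $\theta|_{\mathfrak{k}^2}$ is an involution whose fixed set is $\mathfrak{h}_1$ and whose $(-1)$-eigenspace is exactly $\mathfrak{h}_3$. Applying the Gantmacher theorem (Theorem~\ref{chooseH}) and Lemma~\ref{subalgebra}(2) to the symmetric pair $(\mathfrak{k}^2,\theta|_{\mathfrak{k}^2})$ — after choosing, just as was done for $\mathfrak{k}^1$, a fundamental system of $F_4$ adapted to this decomposition — realizes $\mathfrak{h}_3$ as an irreducible $\mathrm{ad}(\mathfrak{h}_1)$-module and produces its highest weight as $-\gamma$ for the appropriate simple root $\gamma$. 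The final step is to re-express this weight in the reference system $\{-\phi,\alpha_1,\alpha_2,-\phi_1\}$ and to verify that it is the fundamental dominant weight corresponding to $-\phi_1$.

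To make this identification clean, and to guard against slips in the change of fundamental systems, I would cross-check it against $D_4$-triality. Since $\dim\mathfrak{g}-\dim\mathfrak{h}_1=52-28=24$ and $\mathfrak{h}_2,\mathfrak{h}_4$ are each $8$-dimensional, $\mathfrak{h}_3$ is $8$-dimensional as well; moreover $\mathfrak{h}_3\oplus\mathfrak{h}_4=\mathfrak{p}$ is the $16$-dimensional spin isotropy representation of $(F_4,B_4)$, whose restriction to $D_4\subset B_4$ splits as the sum $S^+\oplus S^-$ of the two half-spin modules. Hence $\mathfrak{h}_2,\mathfrak{h}_3,\mathfrak{h}_4$ are exactly the three mutually inequivalent $8$-dimensional irreducibles of $D_4$, whose highest weights are the fundamental weights attached to the three outer nodes $\alpha_2,-\phi_1,-\phi$ of the $D_4$-diagram. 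As $\mathfrak{h}_2$ and $\mathfrak{h}_4$ account for the weights at $\alpha_2$ and $-\phi$, the remaining module $\mathfrak{h}_3$ is forced to carry the weight at $-\phi_1$, which is the assertion.

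I expect the main obstacle to be purely bookkeeping: carrying the highest weight of $\mathfrak{h}_3$ through the change from the $\tau$-adapted fundamental system back to $\{-\phi,\alpha_1,\alpha_2,-\phi_1\}$ without sign or coefficient errors — the symbol $\phi_1$ having, in particular, denoted two different roots along the way — and then confirming dominance by pairing against the simple coroots. The triality elimination above both organizes the argument and serves as the decisive consistency check, so I would use it to pin down which fundamental weight is obtained rather than relying on the raw coordinate computation alone.
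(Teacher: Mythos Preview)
Your approach is correct and mirrors the paper's: after treating $\mathfrak{h}_2$ and $\mathfrak{h}_4$ exactly as you describe, the paper simply writes ``Similarly, we have'' and states the proposition, meaning precisely the third-involution argument you spell out for $\mathfrak{h}_3$ via $\mathfrak{k}^2=\{x:\tau(x)=x\}\cong B_4$ and Lemma~\ref{subalgebra}. Your $D_4$-triality cross-check is a useful addition the paper does not make explicit.
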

\section{Einstein metrics on the compact simple Lie group $F_4$}
Let ${\mathfrak g}$ be a compact simple Lie algebra, $\rho:
{\mathfrak g}\rightarrow {\mathfrak gl}(V)$ be an irreducible
representation of ${\mathfrak g}$, $(\ ,\ )$ be a non-degenerate,
associative and symmetric bilinear form on $\mathfrak g$. Assume
that $\{e_1,\cdots,e_n\}$ is an orth-normal basis of ${\mathfrak g}$
corresponding to $(\ ,\ )$. Then for any $x\in {\mathfrak g}$,
$$[\sum_i\rho^2(e_i),\rho(x)]=0.$$
Since $\rho$ is irreducible, Schur Lemma implies that
$\sum_i\rho^2(e_i)$ is a constant. Furthermore assume that the highest weight of
$\rho$ is $\lambda$. Then
\begin{lemma}\label{lemma}
Let notations be as above. Then
$$\sum_i\phi^2(e_i)=((\lambda+\delta,\lambda+\delta)-(\delta,\delta))Id,$$
where $\delta$ denotes the half of the sum of positive roots of
$\mathfrak g$.
\end{lemma}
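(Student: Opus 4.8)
The plan is to recognize $\Omega := \sum_i \rho^2(e_i)$ as the Casimir operator attached to the invariant form $(\ ,\ )$ and to compute its (constant) value by evaluating it on a highest weight vector. First I would record the basis-independence: because $(\ ,\ )$ is associative, for any basis $\{u_i\}$ of $\mathfrak{g}^{\mathbb{C}}$ with $(\ ,\ )$-dual basis $\{u^i\}$ (so that $(u_i,u^j)=\delta_i^j$) one has $\sum_i \rho(u_i)\rho(u^i) = \sum_i \rho^2(e_i) = \Omega$, since the tensor $\sum_i u_i\otimes u^i$ is the invariant inverse form and does not depend on the basis. This is the crucial point, as it lets me replace the orthonormal basis $\{e_i\}$ by a basis adapted to the root space decomposition. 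Schur's Lemma, already invoked in the statement, guarantees that $\Omega$ is a scalar, so it suffices to compute $\Omega$ on a single vector.

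Next I would fix a Cartan subalgebra $\mathfrak{t}$ with root system $\Delta$ and positive system $\Delta^+$, take an orthonormal basis $\{H_1,\dots,H_l\}$ of $\mathfrak{t}$, and for each $\alpha\in\Delta^+$ choose root vectors $X_\alpha, X_{-\alpha}$ normalized by $(X_\alpha, X_{-\alpha})=1$, so that $[X_\alpha,X_{-\alpha}]=H_\alpha$ is the element of $\mathfrak{t}$ representing $\alpha$ under $(\ ,\ )$. Since $\{H_k\}$ is self-dual and $X_{\pm\alpha}$ are dual to one another, the basis-independence of the previous step gives
$$\Omega = \sum_{k=1}^l \rho(H_k)^2 + \sum_{\alpha\in\Delta^+}\left(\rho(X_\alpha)\rho(X_{-\alpha}) + \rho(X_{-\alpha})\rho(X_\alpha)\right).$$

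Then I would apply $\Omega$ to a highest weight vector $v_\lambda$. Using $\rho(X_\alpha)v_\lambda=0$ for $\alpha\in\Delta^+$, the terms $\rho(X_{-\alpha})\rho(X_\alpha)v_\lambda$ vanish, while $\rho(X_\alpha)\rho(X_{-\alpha})v_\lambda = \rho([X_\alpha,X_{-\alpha}])v_\lambda = \lambda(H_\alpha)v_\lambda = (\lambda,\alpha)v_\lambda$; the Cartan part gives $\sum_k \rho(H_k)^2 v_\lambda = (\lambda,\lambda)v_\lambda$ because the $H_k$ are orthonormal. Summing and using $\sum_{\alpha\in\Delta^+}\alpha = 2\delta$ yields $\Omega v_\lambda = \left((\lambda,\lambda) + 2(\lambda,\delta)\right)v_\lambda = \left((\lambda+\delta,\lambda+\delta) - (\delta,\delta)\right)v_\lambda$, and scalarity extends this from $v_\lambda$ to all of $V$.

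The step I expect to be most delicate is the bookkeeping in the basis change. Because $\mathfrak{g}$ is a compact real form, the complexified root vectors $X_{\pm\alpha}$ are not real, so I must track the dual-basis normalization and any accompanying signs carefully to be sure the off-diagonal contribution is exactly $\rho(X_\alpha)\rho(X_{-\alpha})+\rho(X_{-\alpha})\rho(X_\alpha)$ with coefficient $1$, and that $[X_\alpha,X_{-\alpha}]$ is normalized so that $\lambda([X_\alpha,X_{-\alpha}]) = (\lambda,\alpha)$ holds with the same form that appears on the right-hand side. Once these identifications are pinned down, the remainder is the standard Freudenthal computation.
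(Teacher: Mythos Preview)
Your argument is correct: this is precisely the standard derivation of the Casimir eigenvalue on an irreducible highest-weight module, and the care you take with the dual-basis normalization (ensuring $(X_\alpha,X_{-\alpha})=1$ so that $[X_\alpha,X_{-\alpha}]=H_\alpha$ with $\lambda(H_\alpha)=(\lambda,\alpha)$) is exactly what is needed to make the computation go through cleanly.

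As for comparison with the paper: the paper does not actually supply a proof of this lemma. It is stated as a known fact (the classical Freudenthal--Harish-Chandra formula for the quadratic Casimir) and then applied directly to compute the various $\sum_j \mathrm{ad}\,e_j^2$ on the pieces $\mathfrak{h}_i$. So your write-up would in fact fill a gap that the authors leave to the literature; there is nothing to contrast against.
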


Let ${\mathfrak g}$, $\theta$, ${\mathfrak k}$ and ${\mathfrak k}_1$
be as that in the previous section. Assume that $B$ is the associative inner product on
$\mathfrak g$ such that $B(\alpha_1,\alpha_1)=2$. Let
$\{e_1,\cdots,e_n\}$ be an orthogonal basis of ${\mathfrak g}$ with
respect to $B$, where $\{e_1,\cdots,e_{i_1}\}$ is the basis of
${\mathfrak h}_1$, $\{e_{i_1}+1,\cdots,e_{i_2}\}$ is the basis of
${\mathfrak h}_2$, $\{e_{i_2}+1,\cdots,e_{i_3}\}$ is the basis of
${\mathfrak h}_3$ and $\{e_{i_3}+1,\cdots,e_{n}\}$ is the basis of
${\mathfrak h}_4$. Since ${\mathfrak p}$ is an irreducible
representation of ${\mathfrak k}$ with the highest weight
$-\alpha_4$, by Lemma~\ref{lemma}, we have:
$$\sum_{j=1}^{i_2}\mathrm{ad}e_j^2|_{\mathfrak p}=((-\alpha_4,-\alpha_4)+2(\delta_{\mathfrak
k},-\alpha_4))Id=9Id.$$ Since ${\mathfrak k}$ is simple, we have
$$\sum_{j=1}^{i_2}\mathrm{ad}e_j^2|_{\mathfrak k}=((\phi_{\mathfrak k},\phi_{\mathfrak k})+2(\delta_{\mathfrak
k},\phi_{\mathfrak k}))Id=14Id.$$ Since ${\mathfrak k}$ and
${\mathfrak h}_1$ are simple, similarly, we have
$$ \sum_{j=1}^{i_1}\mathrm{ad}e_j^2|_{\mathfrak h_1}=12Id; \quad  \sum_{j=1}^{i_1}\mathrm{ad}e_j^2|_{\mathfrak
h_2}=7Id.$$
It follows that
\begin{eqnarray*}
&&\sum_{j=i_1+1}^{i_2}\mathrm{ad}e_j^2|_{\mathfrak h_1}=\sum_{j=1}^{i_2}\mathrm{ad}e_j^2|_{\mathfrak k}-\sum_{j=1}^{i_1}\mathrm{ad}e_j^2|_{\mathfrak h_1}=2Id;\\
&&\sum_{j=i_1+1}^{i_2}\mathrm{ad}e_j^2|_{\mathfrak h_2}=\sum_{j=1}^{i_2}\mathrm{ad}e_j^2|_{\mathfrak k}-\sum_{j=1}^{i_1}\mathrm{ad}e_j^2|_{\mathfrak h_2}=7Id.
\end{eqnarray*}
Similarly, by Proposition~\ref{thm1}, we have

\begin{eqnarray*}
&&\sum_{j=1}^{i_1}\mathrm{ad}e_j^2|_{\mathfrak h_1}=12Id, \sum_{j=1}^{i_1}\mathrm{ad}e_j^2|_{\mathfrak h_2}=\sum_{j=1}^{i_1}\mathrm{ad}e_j^2|_{\mathfrak h_3}=\sum_{j=1}^{i_1}\mathrm{ad}e_j^2|_{\mathfrak h_4}=7Id, \\
&&\sum_{j=i_1+1}^{i_2}\mathrm{ad}e_j^2|_{\mathfrak h_1}=\sum_{j=i_1+1}^{i_2}\mathrm{ad}e_j^2|_{\mathfrak h_3}=\sum_{j=i_1+1}^{i_2}\mathrm{ad}e_j^2|_{\mathfrak h_4}=2Id, \sum_{j=i_1+1}^{i_2}\mathrm{ad}e_j^2|_{\mathfrak h_2}=7Id,\\
&& \sum_{j=i_2+1}^{i_3}\mathrm{ad}e_j^2|_{\mathfrak h_1}=\sum_{j=i_2+1}^{i_4}\mathrm{ad}e_j^2|_{\mathfrak h_2}=\sum_{j=i_2+1}^{i_3}\mathrm{ad}e_j^2|_{\mathfrak h_4}=2Id, \sum_{j=i_2+1}^{i_3}\mathrm{ad}e_j^2|_{\mathfrak h_3}=7Id, \\
&&\sum_{j=i_3+1}^{i_4}\mathrm{ad}e_j^2|_{\mathfrak h_1}=\sum_{j=i_3+1}^{i_4}\mathrm{ad}e_j^2|_{\mathfrak h_3}=\sum_{j=i_3+1}^{i_4}\mathrm{ad}e_j^2|_{\mathfrak h_4}=2Id,\sum_{j=i_3+1}^{i_4}\mathrm{ad}e_j^2|_{\mathfrak h_2}=7Id.
\end{eqnarray*}

By Propositions~\ref{Ricci} and~\ref{thm1}, we have
\begin{proposition}\label{F4}
Let ${\mathfrak g}=F_4$, $\theta$, $\tau$, $B$ be as above. Then the left invariant
metric which is $\mathrm{Ad}({H_1})$ is of the form (\ref{involutionpair}). Furthermore the
metric $\langle\ ,\ \rangle$ with the form~(\ref{involutionpair}) is
Einstein if and only if, for some $u_1, u_2, u_3, u_4\in {\mathbb
R}^+$,
\begin{eqnarray}
  && -\frac{3}{u_1}-\frac{u_1}{2u_2^2}-\frac{u_1}{2u_3^2}-\frac{u_1}{2u_4^2}  \label{1}\\
  &=& \frac{7u_1}{2u_2^2}-\frac{9}{u_2}+\frac{u_3^2+u_4^2-u_2^2}{u_2u_3u_4}  \label{2}\\
  &=& \frac{7u_1}{2u_3^2}-\frac{9}{u_3}+\frac{u_2^2+u_4^2-u_3^2}{u_2u_3u_4}  \label{3}\\
  &=& \frac{7u_1}{2u_4^2}-\frac{9}{u_4}+\frac{u_2^2+u_3^2-u_4^2}{u_2u_3u_4}. \label{4}
\end{eqnarray}
\end{proposition}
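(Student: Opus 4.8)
The plan is to turn the Einstein condition $\mathrm{Ric}=c\langle\ ,\ \rangle$ into a system of scalar equations on the four blocks $\mathfrak h_1,\dots,\mathfrak h_4$ and to check that, after substituting the Casimir constants listed above, this system is precisely the chain of equalities $(\ref{1})=(\ref{2})=(\ref{3})=(\ref{4})$.

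First I would justify the opening claim that every invariant metric has the shape $(\ref{involutionpair})$. By Proposition~\ref{thm1} the summands $\mathfrak h_2,\mathfrak h_3,\mathfrak h_4$ are irreducible $\mathrm{ad}(\mathfrak h_1)$-modules whose highest weights are the three \emph{distinct} fundamental dominant weights attached to $\alpha_2$, $-\phi_1$ and $-\phi$, so they are mutually inequivalent and none is isomorphic to the adjoint module $\mathfrak h_1$. Schur's lemma then forces any $\mathrm{ad}(\mathfrak h_1)$-invariant symmetric bilinear form to be a scalar multiple of $B$ on each block, which is exactly $(\ref{involutionpair})$.

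Next I would reduce the Einstein equation to the blocks. By Proposition~\ref{Ricci}(1) the Ricci tensor kills mixed pairs $x\in\mathfrak h_i$, $y\in\mathfrak h_j$ with $i\ne j$, and so does the metric, so the off-diagonal Einstein equations hold automatically. On a single block the operators $\sum_{k}\mathrm{ad}^2e_k$ appearing in Proposition~\ref{Ricci}(2)--(5) act as scalar multiples of the identity, the scalars being the Casimir numbers $12,7,2$ computed above via Lemma~\ref{lemma} and Proposition~\ref{thm1}. Hence each bracket $\langle\sum_k\mathrm{ad}^2e_k(x),y\rangle$ collapses to $(\text{scalar})\,u_iB(x,y)$ for $x,y\in\mathfrak h_i$, and $\mathrm{Ric}|_{\mathfrak h_i}=R_i\,B|_{\mathfrak h_i}$ for an explicit rational function $R_i(u_1,\dots,u_4)$. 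The metric is then Einstein exactly when $R_i=c\,u_i$ for a common constant $c$, i.e. when $R_1/u_1=\dots=R_4/u_4$, and these four quantities are what the left-hand sides of $(\ref{1})$--$(\ref{4})$ compute.

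Finally I would carry out the simplification block by block. For $\mathfrak h_1$ the three off-diagonal blocks each contribute the constant $2$ and the diagonal block the constant $12$, which directly yields $(\ref{1})$. For $\mathfrak h_2$ (and then $\mathfrak h_3,\mathfrak h_4$ by the evident symmetry among the three $8$-dimensional modules $\mathfrak h_2,\mathfrak h_3,\mathfrak h_4$, a reflection of the triality of $D_4$, so that I only need to do the work once) I would substitute the constants $7,7,2,2$ and expand the two quadratic numerators in Proposition~\ref{Ricci}(3); the cross term $-2u_3u_4$ coming out of $(u_4-u_2+u_3)(u_4+u_2-u_3)$ together with its partner combines with the $-7/u_2$ piece to produce the $-9/u_2$ in $(\ref{2})$, while the remaining quadratic part collapses to $(u_3^2+u_4^2-u_2^2)/(u_2u_3u_4)$. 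The main obstacle is exactly this bookkeeping: ensuring that every quadratic monomial in the numerators of Proposition~\ref{Ricci}(3)--(5) lands in the right place so that the unwieldy Ricci components reduce to the clean symmetric expressions on the right of $(\ref{2})$--$(\ref{4})$; once that is verified, equating $R_i/u_i$ for $i=1,2,3,4$ gives the stated system, and the equivalence is immediate in both directions.
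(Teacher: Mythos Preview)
Your proposal is correct and follows exactly the paper's approach: the paper simply states ``By Propositions~\ref{Ricci} and~\ref{thm1}'' after having computed the Casimir constants via Lemma~\ref{lemma}, and you have unpacked precisely that substitution, including the correct bookkeeping that produces the $-9/u_2$ and $(u_3^2+u_4^2-u_2^2)/(u_2u_3u_4)$ terms. Your justification of the metric shape via Schur's lemma and inequivalence of the three $D_4$-modules is in fact more explicit than what the paper writes.
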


The following is to give the solution of the above equations.
Firstly, assume that $u_2=u_3=1$. Then $(\ref{2})=(\ref{3})$. By
$(\ref{1})=(\ref{2})$ and $(\ref{2})=(\ref{4})$, we have
\[ \left\{ \begin{aligned}
&9u_1^2u_4^2-18u_1u_4^2+2u_1u_4^3+6u_4^2+u_1^2=0,\\
&7u_1(u_4^2-1)+2u_4(2u_4-7)(u_4-1)=0.
\end{aligned}\right.\]
If $u_4=1$, then $u_1=1$ or $u_1=\frac{3}{5}$. If $u_4\not=1$, by
the second equation, we have $$u_1=\frac{2u_4(7-2u_4)}{7u_4+7}.$$
Putting into the first equation, we have
$44u_4^4-182u_4^3+505u_4^2-644u_4+245=0.$ That is,
$$(11u_4-7)(4u_4^3-14u_4^2+37u_4-35)=0.$$ Then $u_4=\frac{7}{11}$ or
$u_4\approx1.3842$. It follows that $u_1=\frac{7}{11}$ or
$u_1\approx 0.7019$ respectively.

Secondly, assume that $u_1=u_2=1$. By $(\ref{1})=(\ref{2})$, we have
$$(u_3-u_4)^2(2u_3u_4+1)=0.$$ Since $u_3u_4>0$, we have that $u_3=u_4$. Then
$(\ref{3})=(\ref{4})$. By $(\ref{2})=(\ref{3})$, we have
$7u_3^2-18u_3+11=0$. It follows that $u_3=1$ or $u_3=\frac{11}{7}$.

Finally, we can assume that $u_i\not=u_j$ if $i\not=j$ by
Theorem~\ref{thm1}. Assume that $u_1=1$. By $(\ref{2})=(\ref{3})$
and $(\ref{2})=(\ref{4})$, we have
\[ \left\{ \begin{aligned}
    &   \frac{7}{2}(u_2+u_3)u_4-9u_2u_3u_4+2(u_2+u_3)u_2u_3=0; \\
    &   \frac{7}{2}(u_2+u_4)u_3-9u_2u_3u_4+2(u_2+u_4)u_2u_4=0.
\end{aligned}\right.\]
Subtracting, we have $u_2+u_3+u_4=\frac{7}{4}.$ Then
$u_2u_3=\frac{7u_4(\frac{7}{4}-u_4)}{22u_4-7}$. Together with
$(\ref{1})=(\ref{2})$, there is no nonzero and real number solution.

Thus we have four solutions:
\begin{enumerate}
   \item $(u_1,u_2,u_3,u_4)=(1,1,1,1);$
   \item $(u_1,u_2,u_3,u_4)=(\frac{7}{11},1,1,\frac{7}{11});$
   \item $(u_1,u_2,u_3,u_4)=(\frac{3}{5},1,1,1);$
   \item $(u_1,u_2,u_3,u_4)\approx(0.7019,1,1,1.3842).$
\end{enumerate}

By Lemma~\ref{natural2}, the first three correspond to naturally
reductive Einstein metrics, and the fourth one corresponds to a
non-naturally reductive Einstein metric which implies Theorem~\ref{main}. For the naturally reductive
Einstein metric, the first one is bi-invariant, the second one is
also given by \cite{DZ1,Je}, and the third one is new.

\section{Acknowledgments}
This work is supported by NSFC (No. 11001133). The first author would like to thank Prof.
J.A. Wolf for the helpful conversation and suggestions.

\end{document}